\theoremstyle{plain}
\newtheorem{thm}{Theorem}[section]
\newtheorem{prop}[thm]{Proposition}
\theoremstyle{definition}
\theoremstyle{remark}
\newtheorem{rem}[thm]{Remark}
\begin{document}

\title{On local analytic expansions of the densities in the context of (micro)-hypoelliptic and classes of semi-elliptic equations.}
\author{J\"org Kampen }
\maketitle

\begin{abstract}
Explicit representations of densities for linear parabolic partial differential equations are useful in order to design computation schemes of high accuracy for a considerable class of diffusion models. Approximations of lower order based on the WKB-expansion have been used in order to compute Greeks in standard models of the interest rate market (cf. \cite{KKS}). However, it turns out that for higher order approximations another related expansion leads to more accurate schemes. We compute a local explicit formula for a class of parabolic problems and determine a lower bound of the time horizon where it holds (given a certain bounded domain). Although the local analytic expansions hold only for strictly elliptic equations we show that the expansions can be used in order to design higher order schemes for various types of (micro)-hypoelliptic and semi-elliptic equations, e.g. the reduced market models considered in \cite{FKSE} or front fixing schemes for multivariate American derivatives \cite{K}.
\end{abstract}

\footnotetext[1]{Weierstrass Institute for Applied Analysis and Stochastics,
Mohrenstr. 39, 10117 Berlin, Germany.
\texttt{{kampen@wias-berlin.de}}.}
{\it 2000 AMS subject
classification:} 60H10, 62G07, 65C05
\section{Introduction}
Higher order approximations of fundamental solutions or densities for scalar linear partial differential equations of parabolic type are very useful as an element for the design of higher order schemes and efficient algorithms in the context of a large class of diffusion models. Note that this concerns also models which do not satisfy the rather strict conditions which we need in order to get local convergent expansions of densities for such equations. As examples consider the front-fixing  iteration for American options in \cite{K} or the analytic AD-scheme in \cite{FKSE} for semi-elliptic equations of type
\begin{equation}
	\label{projectiveHoermanderSystem0}
	\left\lbrace \begin{array}{ll}
		\frac{\partial u}{\partial t}=\frac{1}{2}\sum_{i=1}^mA_i^2u+A_0u\\
		\\
		u(0,x)=f(x),
	\end{array}\right.
\end{equation}
on the domain $[0,T]\times {\mathbb R}^n$, and where   
\begin{equation}
A_i=\sum_{j=1}^n a_{ji}\frac{\partial}{\partial x_j},
\end{equation}
and
are smooth vector fields vector fields which satisfy satisfy the H\"{o}rmander condition only with respect to the subspace of dimension $d\leq n$ at each point $x$, i.e. the sets 
\begin{equation}\label{Hoer0}
\left\lbrace A_i, \left[A_j,A_k \right], \left[ \left[A_j,A_k \right], A_l\right],\cdots |1\leq i\leq m,~0\leq j,k,l\cdots \leq m \right\rbrace 
\end{equation}
span a linear subspace $W_x$ of dimension $d\leq n$ at each point $x\in {\mathbb R}^n$. If $d=n$ then the equation  (\ref{projectiveHoermanderSystem0}) is micro-hypoelliptic and therefore hypoelliptic. Here $\left[.,.\right]$ denotes the Lie bracket of vector fields. Furthermore, recall that a differential operator $L$ with $C^{\infty}$-coefficients is called hypoelliptic on an open set $\Omega\subseteq {\mathbb R}^n$ if $Lu\in C^{\infty}$ implies $u$ in $C^{\infty}$ for any distribution $u$. Operators of the type of equation (\ref{projectiveHoermanderSystem0}) such that (\ref{Hoer0}) spans the full space ${\mathbb R}^n$ for all $x\in {\mathbb R}^n$ , are microhypoelliptic, i.e. preserve the wave front set 
\begin{equation}
\mbox{WF}(u)=\mbox{WF}(Lu),
\end{equation}
where $\mbox{WF}(u)$ is the intersection of the characteristic varieties of pseudo-differential operators $P$ of order zero which satisfy $Pu\in C^{\infty}$. Well, it is clear that the operators of (\ref{projectiveHoermanderSystem0}) are not microhypoelliptic for $d<n$ in general. This is reflected by the fact for pseudodifferential operators of negative order the characteristic variety equal the whole co-tangential bundle $T^0\Omega$ of the domain $\Omega$. Hence, the microhypoelliptic theory is designed for pseudodifferential operators of order $0$ (at least), and this does not apply in general if the H\"{o}rmander condition is satisfied on a subspace of lower dimension than $n$.

Note that typical diffusion models in finance, e.g. (multivariate versions) of the Heston model,  are often  microhypoelliptic or  hypoelliptic, but not strictly elliptic. Moreover, in many practical situations of related stochastic ordinary differential equations the number of Brownian motions may be less than than the dimension of the domain. The situation is considered in \cite{FKSE}. Another example of semi-elliptic problems which are not hypoelliptic in general, are free boundary problems. Consider for example American derivatives in finance. 
Let ${\mathbb R}^m_+$ denote the set of $m$-tuples of strictly positive real numbers. Then for each starting point $(t,x,y)\in [0,T]\times O\subseteq {\mathbb R}^m_+\times {\mathbb R}^d$ a typical diffusion market model is described by the stochastic differential equation 
\begin{equation}\label{marketincomp}
\begin{array}{ll}
(S_t^{t,x,y},Y_t^ {t,y})=(x,y)\in O,\\
\\
\frac{dS_s^ {t,x,k}}{S_s^ {t,x,k}}=r(s, S_s^ {t,x})ds+\sum_{j=1}^n\sigma_{kj}(s,S_s^ {t,x},Y_s^{t,x})dW^j_s,\\
\\
dY_s^{t,y,l}=\nu_l(s, Y_s^ {t,x})ds+\sum_{j=1}^p\beta_{lj}(s,Y_s^{t,x})dW^j_s,
\end{array}
\end{equation} 
which has a unique strong solution, if the drift functions
\begin{equation}
\begin{array}{ll}
r :[0,T]\times {\mathbb R}^m\rightarrow {\mathbb R},\\
\\
\nu_l :[0,T]\times {\mathbb R}^d\rightarrow {\mathbb R},~~l=1,\cdots ,d,
\end{array}
\end{equation}
and the (volatility of) volatility functions  
\begin{equation}
\beta_{lj} :[0,T]\times {\mathbb R}^d\rightarrow {\mathbb R},~~l\in\{1,\cdots ,p\},~~j\in\{1,\cdots ,d\}
\end{equation}
satisfy certain Lipschitz conditions. In this context the value function $V(t,x,y)$ of an American option with payoff $\phi$ is given by
\begin{equation}\label{price}
V(t,x,y):=\sup_{\tau \in \mbox{Stop}_{[t,T]}} E_Q\left[\exp\left( -\int_t^ {\tau}r(s, S_t^{t,x,y})\right)\phi\left( \tau , S_{\tau}^{t,x,y} \right)  \right],
\end{equation}
where $\mbox{Stop}_{[t,T]}$ is the set of all stopping times with value in $[t,T]$. The related obstacle problem is semi-elliptic. Such a problem is (micro)-hypoelliptic only on a subspace in general. 
Next a natural question occurs:
can we use approximations of fundamental solutions of strictly elliptic equations, i.e. equations of form (\ref{projectiveHoermanderSystem0}) which satisfy a uniform ellipticity condition on the whole space, in order to design algorithm in situations of (micro)-hypoellipticity or semi-ellipticity? More precisely and more general, assuming regularity and at most linear growth of the coefficients, how may we use local expansions of the fundamental solutions $p$ of form
\begin{equation}\label{genexp}
p(t,x;0,y)=\frac{1}{\sqrt{4\pi t}^n}\exp\left(-\frac{ d_R^2(x,y)}{4t} \right)\left( \sum_{k=0}^{\infty} d_{k}(x,y)t^k\right)?  
\end{equation}
in the context of the semi-elliptic types of equations mentioned above?
\begin{rem}
Note that representations of the form (\ref{genexp}) hold in the time-homogeneous case. In the time-inhomogeneous case they are of the form
\begin{equation}\label{genexp2}
p(t,x;0,y)=\frac{1}{\sqrt{4\pi t}^n}\exp\left(-\frac{ d_R^2(t,x,y)}{4t} \right)\left( \sum_{k=0}^{\infty} d_{k}(t,x,y)t^k\right), 
\end{equation}
and they are computerized in a similar way (see below).
\end{rem}
Here, the $d_k$ are solutions of recursively defined linear partial differential equations of first order, and $d_R$ is a Riemannian metric defined by the line element 
\begin{equation}\label{line}
ds^2=\sum_{i,j=1}^n a^{*ij}(x)dx_idx_j.
\end{equation}
\begin{rem}
Note that expansions of type (\ref{genexp2}) are different from the WKB-expansion considered in \cite{KKS} and in \cite{K}.  
\end{rem}

Note that the coefficients $a^{*ij}$ denote the components of the inverse of the diffusion matrix $(a^*_{ij})$ where the latter coefficients are determined by the vector filed coefficients $a_{ji}$ in the usual way. The interest in such expansions is related to the observation that the first order equations which determine the coefficient functions $d_k$ are easier to solve than the original second order equations. Well, it is sometimes not that easy. In order to determine the line element (\ref{line}) we need to solve nonlinear eikonal equations of the
form
\begin{equation}\label{eikonal}
d^2_R=\frac{1}{4}\sum_{i,j=1}^n a^*_{ij}d^2_{R,x_i}d^2_{R,x_j}.
\end{equation}
Furthermore, we have to construct solutions of (\ref{eikonal}) in regular spaces since the higher derivatives of $d^2_R$ appear as coefficients (and are involved in source terms) of the equations for the higher order terms $d_k$ in order to get accurate data for these equations.
Note that the 'iff'-condition on the 'boundary', i.e. the condition $d^2_r(x,y)=0$ if and only if $x=y$, leads to a term of lowest order of form
\begin{equation}\label{leadorder}
\sum_{i,j=1}^na^{*ij}(y)(x_i-y_i)(x_j-y_j)
\end{equation}
for $d^2_R$. From that term it looks hopeless to search for analytic expansions of type (\ref{genexp}) in the context of (micro)-hypoelliptic models. In such cases the points of elliptic degeneracies of such models are related to singularities of the inverse $(a^{ij})$ which defines the line element. However, this does not mean that it is impossible to find good approximations of the density in the form (\ref{genexp}) in many cases. Let us consider an example in finance. Univariate or multivariate stochastic volatility diffusion models like (\ref{marketincomp}) are usually of form
\begin{equation}
dS=\mu(S)dt+\sigma(S,Y)dW
\end{equation}
with $\mu$ some drift vector, and $\sigma$ some dispersion matrix-valued function. Lets assume that elliptic degeneracies of $\sigma\sigma^T$ appear for some set of arguments of measure zero (this is true for most of the standard stochastic volatility diffusion models such as the Heston model and multivariate versions of it). Indeed Malliavin calculus tells us that for models satisfying (\ref{Hoer0}) we typically have related weakly invertible covariance matrices. More precisely, if  
$n=d$ and the H\"{o}rmander condition holds, consider the 
associated Stratonovic integral of a process starting at $x\in {\mathbb R}^n$ is
\begin{equation}\label{strat}
X_t=x+\int_0^tA_0(X_s)ds +\sum_{k=1}^m \int_0^tA_k(X_s)\circ dW_k(s).
\end{equation}
Then the associated covariance matrix process $\sigma_t$ of the process satisfies
\begin{equation}
\sigma^{-1}_t\in L^p,
\end{equation}
where
\begin{equation}
\sigma_t=Z_t^{-1}\left[\int_0^tZ_sA_s(X_s)A^T_s(X_s)Z_s^T \right] Z_t^{-1,T},
\end{equation}
and where $Z$ is a matrix-valued invertible process defined by
\begin{equation}
Z_t=I_d-\int_0^t Z_sDA_0(X_s)ds-\sum_{i=1}^n\int_0^tZ_s DA_i(X_s)d\circ W_i(s).
\end{equation}
Here, $I_d$ denotes the $d$-dimensional identity matrix.
This indicates that the set of degeneracies is rather thin (of Lebesgue-measure zero). This is indeed what we observe in finance usually (cf. the Heston model or the SABR-model).
This leads us to the following consideration.
Let $D^{\epsilon}_{\mbox{{\tiny Deg}}}$ be the set of arguments $S,Y$ where the lower ellipticity constant of $\sigma\sigma^T(S,Y)$ is less or equal to $\epsilon$. Since the leading order term of (\ref{genexp}), i.e.
\begin{equation}\label{lead}
 \frac{1}{\sqrt{4\pi t}^n}\exp\left(-\frac{d_R^2}{2t}\right),
\end{equation}
goes rapidly to zero for arguments in $D^{\epsilon}_{\mbox{{\tiny Deg}}}$ if epsilon is small one may consider approximations $p_{\epsilon}$ of the density $p$ which are defined to be zero on the set $D^{\epsilon}_{\mbox{{\tiny Deg}}}$ and equal the expansion for some strictly elliptic operator with ellipticity constant $\epsilon>0$ in the complementary domain. Well, one has to control the time parameter $t$ to be not too large such that (\ref{lead})  really dominates the higher order terms $d_k$ where $d^2_R$ and derivatives of $d^2_R$ are involved. Then using the semigroup property one can set up weak higher order schemes (as considered in \cite{KKS} for example) which are time-discretized according to the nature of the degeneracies. The analysis of the time-discretization may be complicated sometimes, but it is a possible way of approximation in many situations of hypoelliptic operators considered by practitioners.    

From the computational point of view there are challenges other than degeneracies. Let us consider a few. First, the WKB-expansion 
\begin{equation}\label{wkbexp}
p(t,x,y)=\frac{1}{\sqrt{4\pi t}^n}\exp\left(-\frac{d_R^2(x,y)}{4t}  +\sum_{k=0}^{\infty} c_{k}(x,y)t^k\right),  
\end{equation}
considered in \cite{KKS} can lead to numerical instabilities if higher order terms $c_k$ with $k\geq 2$ are considered (see discussion below). In \cite{KKS} the expansion was considered up to the first order term $c_1$. The accuracy and efficiency of schemes based on analytic expansions is demonstrated by the fact that options in the Libor market with ten years of maturity are computable in a typical market situation in one time step. However, for higher volatilities higher order terms are desirable, and in that case numerical instabilities may appear. In order to have a damping leading order term as in (\ref{genexp}) we  compute the equations for $d_k$ instead of the $c_k$ in (\ref{wkbexp}). The resulting first order equations for $d_k$ are more difficult to solve than those for $c_k$. In this paper we compute recursive solutions for the $d_k$ in terms of the $c_k$, and recursively explicit solutions for the coefficients $d_k$ in the case where there is a global transformation of the second order part of the operator to the Laplacian (the 'reducible case'). In that case we also derive a lower bound for the radius of convergence. 
The use of first order solutions for the $d_k$ is not restricted to semi-elliptic equations with a thin set of elliptic degeneracies. For example high-dimensional models in finance are often reduced in order to get computationally feasible models. 
In \cite{FrKa} and in \cite{FKSE} reduced market models are considered which lead to semi-elliptic equations which are not micro-hypoelliptic (they are micro-hypoelliptic only on a subspace).
In general such equations do not have regular densities. Indeed they typically have densities only in a distributional sense.  For example, the lower dimensional Cauchy problem on $[0,T]\times{\mathbb R}^2$:
\begin{equation}
	\label{eq:simpleExampleReducedSystem}
	\left\lbrace \begin{array}{ll}
		\frac{\partial u}{\partial t}=\frac{1}{2}\sigma^2\frac{\partial^2 u}{\partial x_1^2}+\mu\frac{\partial u}{\partial x_2},\\
		\\
		u(0,x)=f(x_1)+g(x_2).
	\end{array}\right.
\end{equation}
has a `distributional density' of the form
\begin{equation}
p(t,x,y):=\frac{1}{\sqrt{2\pi t}^2}\exp\left(-\frac{(x_1-y_1)^2}{2\sigma^2t} \right)\delta(x_2+\mu t-y_2).
\end{equation}
In \cite{FKSE} analytical AD-schemes were defined based on analytical expansions of densities considered in this paper. We shall review these schemes below and show how analytic density approximations of this paper can be used.
In any case, the really hard part is the computation of the Riemannian metric and its derivatives. One approach based on regular polynomial interpolation is considered in \cite{K2}. However, we shall see that a more efficient method exists based on analytic schemes which we shall consider in a subsequent paper (second part of this work). The linear first order equation for the higher order terms are sometimes explicitly solvable, especially in the reducible case where a global transformation of the second order terms to the Laplacian exists. In general they may be computed by the regular interpolation method considered in \cite{K3}. 
Our first observation in this paper is that we can obtain recursive expressions for the $d_k$ from recursive expressions for the $c_k$ of the WKB-expansion, i.e.
in order to compute a local solution of
\begin{equation}\label{parabol1}
\frac{\partial u}{\partial t}= \sum_{ij=1}^na^*_{ij} \frac{\partial^2 u}{\partial x_i\partial x_j}
+\sum_i b_i(t,x)\frac{\partial u}{\partial x_i}
\end{equation}
(with analytic data) in the form 
\begin{equation}\label{ananewgen}
p(t,x;0,y)=\frac{1}{\sqrt{4\pi t}^n}\exp\left(-\frac{d^2_R}{4t} \right)\left( \sum_{k=0}^{\infty} d_{k}(t,x,y)t^k\right).  
\end{equation}
we may first compute the WKB-expansion
\begin{equation}\label{WKBgen}
p(t,x;0,y)=\frac{1}{\sqrt{4\pi t}^n}\exp\left(-\frac{d^2_R}{4t} +\sum_{k=0}^{\infty} c_{k}(t,x,y)t^k \right).  
\end{equation}
Indeed the connection between the $c_k$ and the $d_k$ can be easily computed using Leibniz rule. We have
\begin{equation}
 d_0=\exp(c_0),
\end{equation}
and
\begin{equation}
 d_k=\sum_{i=1}^k \frac{i}{k}d_{k-i}c_i.
\end{equation}
The case where the diffusion part can be globally transformed to a Laplacian is of special interest, because this makes it easier to study the convergence behavior of the higher order terms $d_k$ and the horizon of convergence. We call this case the reducible case.
Accordingly, in the first part of this paper we derive explicit analytic formulas of fundamental solutions to scalar linear equations of the form
\begin{equation}\label{parabol1}
\frac{\partial u}{\partial t}= \Delta u
+\sum_i b_i(t,x)\frac{\partial u}{\partial x_i}
\end{equation}
in terms of analytical representations of the coefficient functions $b_i$ and on a domain $D=(0,T]\times \Omega$ with $\Omega \subseteq {\mathbb R}^n$ a bounded domain. The expansion is local in time (as is the WKB-expansion).  Our explicit expansion is derived from the ansatz
\begin{equation}\label{ananew}
p(t,x,y)=\frac{1}{\sqrt{4\pi t}^n}\exp\left(-\frac{\sum_{i=1}^n \Delta x_i^2}{4t} \right)\left( \sum_{k=0}^{\infty} d_{k}(t,x,y)t^k\right).  
\end{equation}
Here, $\Delta x_i :=(x_i-y_i)$.
Note again that this is different from the WKB-expansion which is of the form
\begin{equation}\label{WKB}
p(t,x,y)=\frac{1}{\sqrt{4\pi t}^n}\exp\left(-\frac{\sum_{i=1}^n \Delta x_i^2}{4t}+\sum_{k=0}^{\infty} c_{k}(t,x,y)t^k \right).  
\end{equation}
The coefficients $d_k$ in (\ref{ananew}) are more difficult to compute than the coefficients
$c_k$ in (\ref{WKB}). However, concerning the growth of the coefficients with respect to the spatial variables we expect for coefficients with bounded derivatives and fixed time that
\begin{equation}
c_k\sim \Delta x^{2k}
\end{equation}
holds formally for the WKB expansion and this implies that higher order approximations involving $c_k$ for $k\geq 2$ may cause problems (we have no negative sign for $c_k$ on the whole domain in general). 
\begin{rem}
In the situations of low volatilities as considered in \cite{KKS} it is sufficient to compute the terms $c_0$ and $c_1$ even in order to get accurate results options with ten years maturity in a scheme with one time step. However, in general we need higher order approximations.
\end{rem}
Hence we expect for coefficients with bounded derivatives and for fixed time that 
\begin{equation}
d_k\sim \exp(c_0)\Delta x^{2k}
\end{equation}  
holds formally for expansions of the form (\ref{ananewgen}). However, as the growth of $c_0$ is linear in $\Delta x$ the highest order term of (\ref{ananewgen}) is an effective damping factor as $|\Delta x|$ becomes large (note, however, that we consider only bounded domains).  Expansions of the form (\ref{ananewgen}) were considered in a more general framework in proofs of the Atiyah-Singer index theorem of course. However, in that case only the behavior in the limit $t\downarrow 0$ is of interest. However, from a perspective of computational implementation it is also of interest to consider how large the time horizon can be chosen given a certain (bounded) domain and a certain set of coefficients $b_i$ such that an expansion of the form (\ref{ananewgen}) holds. First we do the analysis in the special case of (\ref{parabol1}). We shall assume that for all $1\leq i\leq n$ the functions $(t,x)\rightarrow b_{i}(t,x)$ are of linear growth and equal their Taylor expansion and have bounded derivatives of polynomial growth, i.e.
\begin{equation}\label{coeffb}
|D^n_tD^{\alpha}_x b_i(t,x)|\leq C^{n+|\alpha|}
\end{equation}
for some $C>0$. 
 Since we consider bounded domains and from the perspective of computation an important case are finite Fourier series, i.e. 
\begin{equation}\label{finfou}
b_i(t,x)=\sum_{j=-m_0}^{m_0}a_j\sin (j^0t+j\cdot x)+b_j\cos(j^0t+j\cdot x),
\end{equation}
where $m_0=(m_0^0,m_0^1,\cdots ,m_0^n)$ and $j=(j^0,j^1,\cdots ,j^n)$ are $n+1$-tuples, and $j\cdot x:=\sum_{i=0}^n j_i x_i$ denotes the scalar product of dimension $n$ where we identify $x_0$ with time $t$ for simplicity of notation. Since all $L^2$-functions can be approximated on a bounded domain up to any degree of accuracy the class of finite Fourier series is satisfying from a computational or practical  point of view.
In the more general case of variable diffusion coefficients $a^*_{ij}$ we assume that 
The result is also fundamental for investigation of the more general situation of equations with spatially dependent coefficients
\begin{equation}\label{parabol2}
\frac{\partial u}{\partial t}= \sum_{i,j=1}^na^*_{ij}(x)\frac{\partial^2 u}{\partial x_i\partial x_j}
+\sum_i b_i(x)\frac{\partial u}{\partial x_i},
\end{equation}
or  equations with coefficients dependent of space and time as in
\begin{equation}\label{parabol2}
\frac{\partial u}{\partial t}= \sum_{i,j=1}^na^*_{ij}(t,x)\frac{\partial^2 u}{\partial x_i\partial x_j}
+\sum_i b_i(t,x)\frac{\partial u}{\partial x_i},
\end{equation}
because some arguments will of the reducible case will transfer to the irreducible case. However, the extension hinges on a deeper analysis of (\ref{eikonal}). We will state the result but an extension of a the proof is given in a subsequent paper.
Since second order PDEs are essentially symmetric, it is essentially an equation of form
\begin{equation}\label{eikonal2}
d^2_R=\frac{1}{4}\sum_{i=1}^n \lambda_{i}(x)d^2_{R,x_i}d^2_{R,x_i},
\end{equation}
which has to be solved. For an extension of the convergence proof additional assumptions on the coefficient functions are needed. In order to compute a lower order bound of convergence we assume that
\begin{equation}\label{coeffa}
|D^n_tD^{\alpha}_x a_{ij}(t,x)|\leq C^{n+|\alpha|}
\end{equation}
for some $C>0$ and for all $1\leq i,j\leq n$. 
 Since we consider bounded domains and from the perspective of computation an important case are finite Fourier series, i.e. 
\begin{equation}\label{finfoua}
a_{ij}(t,x)=\sum_{j=-m_0}^{m_0}a^*_j\sin (j^0t+j\cdot x)+b^*_j\cos(j^0t+j\cdot x),
\end{equation}
with an analogous notation as above.
In the following we state a main theorem which makes the preceding remarks precise and then prove the theorem. Then in section 3 we compute a recursively explicit formulas for the coefficients $d_k$. In section 4 we prove convergence and determine a lower bound for the time horizon where the expansion converges.  In section 5 we consider extensions to parabolic equations with time-dependent coefficients. In section 6 we consider the expansion of parabolic equations with variable coefficients. 

In section 7 we consider application to (micro)-hypoelliptic and semi-elliptic equations, i.e. the design of weak higher order schemes in this context. In section 8 we consider applications to American derivatives as a second type of semi-elliptic equations which are not (micro)-hypoelliptic in general.

\section{A convergence result for reducible diffusion equations}
While heat-kernel expansions of the form (\ref{ananew}) are well-known the following questions deserve investigation:
\begin{itemize}
 \item what is a lower bound for the exact time horizon $T_0$ where such an expansion holds ? Can $T_0$ be computed in terms of the coefficient functions $b_i$? 
 
 \item is there an exact formula in terms of analytical expansions of the coefficient functions? 
\end{itemize}
Both questions are very important as they are fundamental in order to compute an efficient scheme for parabolic problems of type (\ref{parabol1}). The answer to the first questions tells us what time step size we have to choose in order to construct a locally analytic weak higher order scheme. The answer to the second question provides a formula for each time iteration step of the scheme, where the semi-group property is invoked to get a global scheme. 

We have  
\begin{thm}
Given assumption (\ref{coeffb}) there exists a finite time horizon $T_0$ such that on the domain $\Omega\times (0,T_0]$ for any finite $T_0>0$ and any domain $\Omega\subseteq {\mathbb R}^n$ a constant $\beta$ can be computed such that the fundamental solution of 
\begin{equation}\label{parasystthm}
\frac{\partial u}{\partial t}=\sum_{j=1}^n \frac{\partial^2 u}{\partial x_j^2} 
+\sum_{i=1}^n b_{i}\frac{\partial u}{\partial x_i}
\end{equation}
has the pointwise valid representation
\begin{equation}\label{pj}
p(t,x,0,y)=\frac{1}{\sqrt{4\pi t(\tau)}^n}\exp\left(-\frac{\sum_{i=1}^n \Delta x_i^2}{4t}\right)\left(\sum_{k=0}^{\infty}d_{k}(t,x,y)t^k \right),  
\end{equation}
for $j=1,\cdots ,n$, and for $(t,x) \in(0,\beta T_0)\times \Omega$. If (\ref{finfou}) holds then a lower bound of the constant $\beta$ is given by
\begin{equation}\label{beta0}
\beta <\frac{1}{3(n(2|m_0|+1))\overline{e}R^2|m_0|^{2}},
\end{equation}
where $2|m_0|+1$ is (an upper bound of) the number of terms in the finite Fourier representation of $b_i$ (any $i\in \left\lbrace 1,\cdots ,n\right\rbrace$ along with $|m_0|:=\max_{j\in \left\lbrace 0,\cdots n\right\rbrace}m^j_0$ and $R$ is a radius of a ball $B_R(0)$ such that the spatial part of the domain $\Omega$ is included, i.e. $B_R(0)\supseteq \Omega$, and $\overline{e}$ is an upper bound for the Fourier coefficients of the Fourier representation of the drift function $b_i$ where $i\in \left\lbrace 1,\cdots ,n\right\rbrace$.  
For the coefficient functions $d_k$ the following holds: for $k=0$ we have
\begin{equation}\label{c0}
 d_{0}(t,x,y)= \exp\left( \sum_m (y_m-x_m)\int_0^1 b_m (t,y+s(x-y))ds\right) ,
\end{equation}

\begin{equation}
d_m(t,x,y)=\sum_{k=1}^m \frac{k}{m}d_{m-k}\int_0^1 R_{k-1}(t,y+s(x-y),y)s^{k}ds
\end{equation}
with 
\begin{equation}\label{tk}
\begin{array}{ll}
R_{k-1}(t,x,y)=&\frac{\partial}{\partial t}c_{k-1}+\Delta c_{k-1}+\sum_{l=1}^n\sum_{r=0}^{k-1}\left( \frac{\partial}{\partial x_l}c_{r}\frac{\partial}{\partial x_l}c_{k-1-r}\right)\\
\\
&+\sum_{i} b_i(x)\frac{\partial}{\partial x_i}c_{k-1}
\end{array}
\end{equation}
\end{thm}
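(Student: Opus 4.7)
The plan is to combine a formal matching-of-powers argument for the ansatz (\ref{ananew}) with an explicit solution of the resulting transport equations along Euclidean rays, and then to establish convergence of the series by Cauchy-type estimates on the coefficients $c_k$ of the associated WKB form (\ref{WKB}), propagated to $d_k$ via the Leibniz-type relation $d_0=\exp(c_0)$, $d_k=\sum_{i=1}^k \tfrac{i}{k}d_{k-i}c_i$ recorded in the introduction. Working with the $c_k$'s is technically simpler because they satisfy scalar first order transport equations rather than the more intricate convolution recursions satisfied by the $d_k$'s directly.

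First I would substitute $p = (4\pi t)^{-n/2}\exp(\phi)$ with $\phi=-|\Delta x|^2/(4t)+\sum_{k\ge 0}c_k(t,x,y)t^k$ into (\ref{parasystthm}), use $\partial_t K=\Delta K$ to cancel the singular $|\Delta x|^2/(4t^2)$ and $-n/(2t)$ contributions, and then match coefficients of $t^{k-1}$. The $t^{-1}$ equation gives the eikonal/transport constraint $\Delta x\cdot\nabla c_0=-\tfrac12\sum_i b_i\Delta x_i$, and for $k\ge 1$ one obtains the inhomogeneous transport equation $kc_k+\Delta x\cdot\nabla c_k=R_{k-1}$ with $R_{k-1}$ as in (\ref{tk}). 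Parametrising along the ray $x(s)=y+s(x-y)$ and using the identity $\tfrac{d}{ds}\bigl(s^k c_k(x(s))\bigr)=s^{k-1}\bigl(kc_k+\Delta x\cdot\nabla c_k\bigr)|_{x(s)}$ immediately yields closed form integrals for $c_0$ (hence $d_0=\exp(c_0)$ after an integration and the formula (\ref{c0})) and for each $c_k$ as an integral of $R_{k-1}$ against an explicit power of $s$. Feeding the resulting $c_k$ into the Leibniz recursion produces the claimed formula for $d_m$.

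The main work is the convergence step. I would set up an inductive estimate of the form $\sup_{\Omega}|D^\alpha c_k|\le M_k\,\alpha!\,A^{|\alpha|}$ with a geometrically controlled sequence $M_k$, using the hypothesis (\ref{coeffb}) to bound all the $\partial_t$-, $\Delta$- and $b_i\partial_{x_i}$-contributions in (\ref{tk}), and the Cauchy product structure $\sum_{r=0}^{k-1}\partial_{x_i}c_r\partial_{x_i}c_{k-1-r}$ to close the recursion. The crucial point is that the division by $k$ produced by the transport equation (equivalently, the factor $\int_0^1 s^{k-1}ds=1/k$ coming from integration along the ray) exactly compensates the factorial growth generated by successive differentiations, yielding $|d_k|\le C^k$ on $\Omega$ and hence convergence of $\sum d_k t^k$ for $t$ smaller than some $1/C$. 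This is where I expect the principal technical difficulty: the combinatorial bookkeeping of derivatives, Cauchy products and the ray-integration must be arranged so that the constant $C$ is explicit in the data of the problem.

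Finally, under the finite-Fourier hypothesis (\ref{finfou}) the derivative bounds sharpen: each derivative of $b_i$ brings out a factor of at most $|m_0|$, and the number of terms appearing in one step of the induction is controlled by $n(2|m_0|+1)$, while the spatial factors $\Delta x_i$ are bounded by $2R$ on $\Omega\subseteq B_R(0)$. Tracking these explicit constants through the induction, and using $\overline e$ as the uniform bound on the Fourier coefficients, I would arrive at a geometric estimate whose inverse ratio produces the claimed lower bound (\ref{beta0}) $\beta<\bigl(3n(2|m_0|+1)\,\overline e\,R^{2}|m_0|^{2}\bigr)^{-1}$; the factor $3$ absorbs the finite number of structurally distinct contributions ($\partial_t c_{k-1}$, $\Delta c_{k-1}$, the Cauchy-product term, and the drift term) in $R_{k-1}$, while the $R^{2}$ and $|m_0|^{2}$ reflect, respectively, the spatial size of $\Omega$ and the two spatial derivatives present in (\ref{tk}). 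Combining this $\beta$ with any prescribed $T_0$ gives the time horizon on which the pointwise representation (\ref{pj}) is valid.
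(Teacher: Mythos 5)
Your formal half coincides with the paper's: the same power-matching in the WKB ansatz, the same ray parametrisation giving $c_0(x,y)=\sum_m(y_m-x_m)\int_0^1b_m(y+s(x-y))\,ds$ and $c_k(x,y)=\int_0^1R_{k-1}(y+s(x-y),y)\,s^{k-1}ds$, and the same logarithmic (Leibniz) recursion $d_0=\exp(c_0)$, $d_k=\sum_{i=1}^k\frac{i}{k}d_{k-i}c_i$ to pass to the $d_k$. For the convergence step the paper is organised differently from your sketch: it rescales time, $t=\beta\tau$, so that each step of the recursion carries a factor $\beta/k$, and then majorises $c_{k,\beta}$ by the sum over all $3^k$ concatenations of three elementary operators (Laplacian, quadratic Cauchy-product, drift) applied to $c_0$; the bound (\ref{beta0}) is read off from the requirement that the dominant string, the pure quadratic concatenation, tends to zero, using an inductive estimate of the form $|O^{2,n}_k[\exp(im_0x)]|\le |m_0|^{2k}3^kk!$, whose $k!$ is cancelled by the accumulated $1/k$ factors coming from the ray integrations. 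Your direct Cauchy-type induction on $\sup_\Omega|D^\alpha c_k|$ is a legitimate variant of the same majorant idea, and would even give a cleaner statement (geometric bounds on $c_k$, hence a genuine radius of convergence) than the paper's ``$c_{k,\beta}\downarrow 0$''.

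The genuine gap is that the decisive estimate is only promised, and as described it would not reproduce the constant. Under (\ref{coeffb}), and a fortiori under (\ref{finfou}), differentiating the data does \emph{not} generate factorials: each derivative costs a factor $C$, respectively $|m_0|$. So your claim that the $1/k$ from the ray integral ``exactly compensates the factorial growth generated by successive differentiations'' points at the wrong enemy. What must actually be beaten is the combinatorial proliferation of terms produced by iterating the quadratic term $\sum_{r=0}^{k-1}\partial_{x_l}c_r\,\partial_{x_l}c_{k-1-r}$: unwinding the recursion $k$ times produces on the order of $k!$ summands (this is precisely the source of the $3^kk!$ in the paper's bound for the iterated quadratic operator), and it is this factorial that the accumulated $1/k!$ from the ray integrations cancels, leaving the purely geometric ratio that yields (\ref{beta0}). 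Relatedly, your attribution of the factor $3$ to the number of structurally distinct terms of $R_{k-1}$ is not how it arises: in the paper it comes from the product-rule growth of the iterated quadratic operator, while the count of the three operator types enters separately, through the $3^k$ summands of the majorant, which are then argued to be dominated by the pure quadratic string (and the drift strings are absorbed by choosing $\beta$ below the separate threshold $1/(\overline{e}\,n(2|m_0|+1))$). Until you close the induction — specify $M_k$, show the Cauchy products close the recursion, and exhibit the factorial cancellation with explicit constants — the quantitative bound on $\beta$, which is the substance of the theorem, is not established; the remaining items in your outline (the formulas for $c_0$, $c_k$, $d_m$, and analyticity of $\exp\left(\sum_k c_kt^k\right)$ once $\sum_k c_kt^k$ converges) are unobjectionable and agree with the paper.
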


\begin{rem}
Note that the relation (\ref{beta0}) contains also the relation of the time horizon of the convergence to the size of the domain, i.e. the time horizon is proportional to the inverse of the square of the domain. 
\end{rem}

\begin{thm}
More explicitly, we have
\begin{equation}\label{powerco}
\begin{array}{ll}
 c_{0}(t,x,y)=c_0(x,y)=&-\sum_{i} \sum_{\gamma}b_{i\gamma}(y)\Delta x^{\gamma+1_i}\frac{1}{1+|\gamma|}\\
\\
&\equiv \sum_{\gamma} c_{0\gamma}\Delta x^{\gamma}
\end{array}
\end{equation}
and, given the power series representation 
\begin{equation}\label{powbeta}
c_{k-1}(t,x,y)
=\sum_{\gamma,l}c_{(k-1)\gamma l}(y)\Delta x^{\gamma}t^l
\end{equation}
we have
\begin{equation}\label{powerck}
\begin{array}{ll}
 c_{k} (t, x,y) =\sum_{\gamma,l}lc_{(k-1)\gamma l}(y)\Delta x^{\gamma}t^l+\\
\\
 \sum_{\gamma}{\big \{}\sum_i\sum_{\rho + \alpha = \gamma} (\rho_i +1)(\alpha_i +1 )c_{r(\beta +1_i)}c_{(k-1-r)(\alpha +1_i)}   \\
\\
 +\sum_i (\gamma_i+2)(\gamma_i+1)c_{k(\gamma+2_i)}+ 
\sum_{\rho + \alpha = \gamma}(\sum\frac{1}{\beta !}b_{i}(y)\times\\
\\
(\alpha_i+1)c_{(k-1)(\alpha+1_i)}{\big \}}\left( \sum_{\delta=0}^{\gamma}p_{k\delta}^{y\gamma} \Delta x^{\delta}\right),  
\end{array}
\end{equation}
where with $\displaystyle \delta_{\Sigma} := \sum_{i=1}^n \delta_i$, and
\begin{eqnarray}\label{abinom}
\sum_{\delta=0}^{\gamma}p_{k\delta, \beta,\tau}^{y\gamma} \Delta x^{\delta}&=&\sum_{\delta=0}^{\gamma}\frac{\beta}{(1-\tau)\delta_{\Sigma} +k}\nonumber\\
\\
&\times&\left[  \prod_{i=1}^n\left( \frac{\gamma_i !}{\delta_i ! (\gamma_i - \delta_i)!}\right)  y^{(\gamma - \delta)}\right] \Delta x^{\delta}.\nonumber
\end{eqnarray}
\end{thm}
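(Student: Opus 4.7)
The plan is to substitute the WKB ansatz (\ref{WKB}) into the parabolic operator (\ref{parasystthm}), match orders in $t$ to obtain a hierarchy of first-order transport equations for the $c_k$, and then solve these by the method of characteristics along straight rays from $y$ to $x$. Taylor-expanding the drifts $b_i$ around $y$ will convert the resulting line integrals into the explicit power-series coefficients asserted in (\ref{powerco}) and (\ref{powerck}). The content of the present theorem is purely algebraic; convergence of the series it delivers is the matter of the subsequent section.

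I would first substitute $p = (4\pi t)^{-n/2}\exp\bigl(-|x-y|^2/(4t) + C\bigr)$ with $C := \sum_{k \geq 0} c_k(t,x,y)\, t^k$ into $(\partial_t - \Delta - \sum_i b_i \partial_{x_i})p = 0$. Because the Gaussian prefactor alone already solves the free heat equation, the $t^{-2}$ and $t^{-1}$ singularities cancel and, after multiplying through by $t$, one is left with
\[
t\,\partial_t C + \sum_i \Delta x_i\, \partial_{x_i} C = t\,\Delta C + t\,|\nabla C|^2 + t\sum_i b_i\,\partial_{x_i} C - \tfrac{1}{2}\sum_i b_i\,\Delta x_i.
\]
Collecting the coefficient of $t^k$ yields, for $k \geq 1$, the transport equation $k\, c_k + \sum_i \Delta x_i\, \partial_{x_i} c_k = R_{k-1}$ with $R_{k-1}$ exactly as in (\ref{tk}), and for $k = 0$ the equation $\sum_i \Delta x_i\, \partial_{x_i} c_0 = -\tfrac{1}{2}\sum_i b_i\, \Delta x_i$.

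Next I would exploit that the Euler-type operator $k + \sum_i \Delta x_i \partial_{x_i}$ acts diagonally on the $\Delta x$-monomial basis, multiplying $\Delta x^\gamma$ by $k+|\gamma|$. Its inverse applied to an analytic source $g(x,y)$ is therefore the ray integral
\[
\bigl(k + \textstyle\sum_i \Delta x_i \partial_{x_i}\bigr)^{-1} g(x,y) = \int_0^1 s^{k-1}\, g\bigl(y + s(x-y),\,y\bigr)\, ds,
\]
as one checks on monomials via $\int_0^1 s^{k-1}(s\Delta x)^\gamma\, ds = \Delta x^\gamma/(k+|\gamma|)$. For $c_0$ this directly produces the line integral of $b_i$ along the ray from $y$ to $x$, and the Taylor expansion $b_i(y+s\Delta x) = \sum_\gamma b_{i\gamma}(y)\, s^{|\gamma|}\Delta x^\gamma$ together with $\int_0^1 s^{|\gamma|}\,ds = 1/(|\gamma|+1)$ reproduces (\ref{powerco}). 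For $k \geq 1$ I would induct on $k$: assuming $c_{k-1}$ admits the representation (\ref{powbeta}), compute each of the four summands of $R_{k-1}$ as a power series in $(\Delta x, t)$. The time derivative produces the prefactor $l\,c_{(k-1)\gamma l}$; $\Delta c_{k-1}$ contributes a $(\gamma_i+2)(\gamma_i+1)$ prefactor to the shifted coefficient after $\gamma \mapsto \gamma + 2_i$; the quadratic convolution $\sum_{r=0}^{k-1}\nabla c_r \cdot \nabla c_{k-1-r}$ is Cauchy-multiplied coordinate-wise to give the $\sum_{\rho+\alpha=\gamma}(\rho_i+1)(\alpha_i+1)\,c_{r(\rho+1_i)}\,c_{(k-1-r)(\alpha+1_i)}$ block; and multiplying the Taylor series $b_i(x) = \sum_\beta b_{i\beta}(y)\Delta x^\beta/\beta!$ against $\partial_{x_i} c_{k-1}$ supplies the drift block. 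Applying the ray integral to $R_{k-1}$ then introduces the denominator $(1-\tau)\delta_\Sigma + k$ via a reparametrization of the ray, while the identity $x^\gamma = \sum_{\delta \leq \gamma}\prod_i \binom{\gamma_i}{\delta_i}\,y^{\gamma-\delta}\Delta x^\delta$ accounts for the multinomial factors in (\ref{abinom}).

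The chief obstacle will be combinatorial rather than conceptual: aligning the four distinct multi-index shifts $\gamma \mapsto \gamma + 1_i$, $\gamma \mapsto \gamma + 2_i$, $\gamma \mapsto \rho + \alpha$, together with the re-expansion about $0$ via $x = y + \Delta x$, while keeping the indices on the $c_{k'\gamma l}$ in bijection with those in (\ref{powerck}). A clean execution treats each of the four contributions to $R_{k-1}$ in isolation, assembles them, and only then inverts the Euler operator; the formula (\ref{powerck}) then falls out mechanically. The quantitative bound (\ref{beta0}) on the time horizon and the actual convergence of the resulting series are taken up in the next section.
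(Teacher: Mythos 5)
Your proposal follows essentially the same route as the paper's own computation: substitute the WKB ansatz, collect powers of $t$ to obtain the Euler-type transport hierarchy $k\,c_k+\sum_i\Delta x_i\,\partial_{x_i}c_k=R_{k-1}$, invert it by the ray integral $\int_0^1 s^{k-1}\,(\cdot)\bigl(y+s(x-y)\bigr)\,ds$, and Taylor-expand $b_i$ and $c_{k-1}$ about $y$, using the binomial expansion of $(y+s\Delta x)^{\gamma}$ to produce the factors $p^{y\gamma}_{k\delta}$ with denominator $\delta_{\Sigma}+k$. The only divergence is bookkeeping: your order-$t^0$ equation correctly carries the source $-\tfrac12\sum_i b_i\,\Delta x_i$, so your line integral yields one half of (\ref{powerco}); this factor-of-two discrepancy is already present in the paper's own derivation (which drops the $\tfrac12$ between (\ref{tsmminus1}) and the formula for $c_0$), and is not a gap in your argument.
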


\section{Formal computation of solution of the parabolic equation}
First we consider the equation (\ref{parabol1})  with time-homogeneous coefficients, i.e. where the coefficient functions $x\rightarrow b_{i}(x)$ depend only on the spatial variable $x$. It turns out that the ansatz
\begin{equation}\label{ansatzd}
p(t,x,y)=\frac{1}{\sqrt{4\pi t}^n}\exp\left(-\frac{\sum_{i=1}^n \Delta x_i^2}{4t}\right) \left(\sum_{k=0}^{\infty} d_k(x,y)t^k \right)  
\end{equation}
leads to first order partial differetial equations for $d_k$ with variable coefficients which are difficult to solve if looked at from an abstract point of view. Hence we compute the $d_k$ via the $c_k$ of the
WKB- ansatz
\begin{equation}\label{ansatzc}
p(t,x,y)=\frac{1}{\sqrt{4\pi t}^n}\exp\left(-\frac{\sum_{i=1}^n \Delta x_i^2}{4t}+\sum_{k=0}^{\infty} c_k(x,y)t^k \right).  
\end{equation}
Hence we first derive recursive relations for the coefficients $c_k$ and then get the recursive relations for the $d_k$ via the general logarithmic recursion outlined in the introduction, i.e. we use the recursion
\begin{equation}
 d_0=\exp(c_0),
\end{equation}
and
\begin{equation}
 d_k=\sum_{i=1}^k \frac{i}{k}d_{k-i}c_i.
\end{equation}
In a second step, using global analyticity of the coefficient functions $b_{i}$, we derive the explicit solution in terms of Taylor power series of $b_{i}$. 
For the time derivative we get
\begin{equation}
\frac{\partial p}{\partial t}(t,x)=\left(-\frac{n}{2t}+\frac{\sum_i \Delta x_i^2}{4t^2}+\sum_k kc^j_k(x,y)t^{k-1}\right)p (t,x,y).
\end{equation}
For the first and second spatial derivatives we get
\begin{equation}
\frac{\partial p}{\partial x_l}=\left(\frac{-\Delta x_l}{2t}+\sum_k \frac{\partial}{\partial x_l}c_k(x,y)t^k \right) p(t,x,y),
\end{equation}
and
\begin{equation}
\begin{array}{ll}
\frac{\partial^2 p}{\partial x_l^2}=&{\Bigg (}-\frac{1}{2t}+\sum_k \frac{\partial^2}{\partial x_l^2}c_k(x,y)t^k \\
\\
&+\left(-\frac{\Delta x_l}{2t}+\sum_k \frac{\partial}{\partial x_l}c_k(x,y)t^k \right)^2{\Bigg )} p(t,x,y).
\end{array}
\end{equation}
Plugging into (\ref{parabol1}) and ordering with respect to the terms $t^{-2},t^{-1}$ etc. we get the following
recursive relations for the $c^j_k$, where $1\leq j\leq n$:
\begin{equation}\label{tsmminus2}
t^{-2}:~~\frac{\sum_i \Delta x_i^2}{4t^2}=\sum_{l}\frac{ \Delta x_l^2}{4t^2}
\end{equation}

\begin{equation}\label{tsmminus1}
t^{-1}:~~-\frac{n}{2t}=-\sum_{l}\frac{1}{2t}-\frac{1}{2t}\left( \sum_l \Delta x_l\frac{\partial c_0^j}{\partial x_l}-\sum_{lm} b^j_{lm}(x)\Delta x_m\right) , 
\end{equation}
and for all $k-1\geq 0$.
\begin{equation}\label{tsmk}
\begin{array}{ll}
t^{k-1}:~~kc_k+\sum_l \Delta x_l\frac{\partial c_k}{\partial x_l}=&\Delta c_{k-1}+\sum_{l=1}^n\sum_{r=0}^{k-1}\left( \frac{\partial}{\partial x_l}c_r\frac{\partial}{\partial x_l}c_{k-1-r}\right)\\
\\
&+\sum_{m} b_{m}(x)\frac{\partial}{\partial x_m}c_{k-1}\equiv R_{k-1}(x,y).
\end{array}
\end{equation}
Note that the first order coupling of the system is essentially reflected in the recursive first order partial differential equations starting from (\ref{tsmk}). This would be different if we had coupling via the second order terms and it makes the solution of the system much easier.
Note that equation (\ref{tsmminus2}) is satisfied. Equation (\ref{tsmminus1}) is equivalent to
\begin{equation}
\sum_l \Delta x_l\frac{\partial c_0}{\partial x_l}=-\sum_{m} b_{m}(x)\Delta x_m, 
\end{equation}
with the solution
\begin{equation}
 c_0(x,y)=\sum_m (y_m-x_m)\int_0^1 \sum_lb_{l}(y+s(x-y))ds
\end{equation}
and for all $k\geq 1$ we have
\begin{equation}
c_k(x,y)=\int_0^1 R_{k-1}(y+s(x-y),y)s^kds
\end{equation}
with $R_{k-1}$ as in equation \eqref{tk}.
Next we compute the solution explicitly doing the integral for $c_0$ first. We abbreviate $\Delta x=(x-y)$ with components $\Delta x_i=(x-y)_i$ and for a multiindex $\alpha=(\alpha_1,\cdots, \alpha_n)$ we write $\Delta x^{\alpha}:=\Pi_{i=1}^n\Delta x_i^{\alpha_i}$. Furthermore, we define $|\alpha|=\sum_i \alpha_i$ If
\begin{equation}
b_{m}(x)=\sum_{\gamma} \frac{1}{\gamma !}b_{m,\gamma}(y)(\Delta x)^{\gamma},
\end{equation}  
along with some multiindex $\gamma$, then
\begin{equation}
\begin{array}{ll}
 c_0(x,y)&=-\sum_m \Delta x_m \int_0^1 \sum_l b_{m}(y + s\Delta x)ds\\
\\
&=-\sum_{m} \Delta x_m \int_0^1 \sum_{\gamma} b_{m\gamma}(y)(s\Delta x)^{\gamma}ds\\
\\
&=-\sum_{l,m} \Delta x_m \sum_{\gamma} b_{m\gamma}(y)\Delta x^{\gamma} \int_0^1 s^{|\gamma|}ds\\
\\
&=-\sum_{l,m} \sum_{\gamma}b_{m\gamma}(y)\Delta x^{\gamma+1_i}\frac{1}{1+|\gamma|}s^{|\gamma|+1}\Big |_0^1\\
\\
&=-\sum_{l,m} \sum_{\gamma}b_{m\gamma}(y)\Delta x^{\gamma+1_i}\frac{1}{1+|\gamma|}\\
\\
&\equiv \sum_{\gamma} c_{0\gamma}\Delta x^{\gamma}.
\end{array}
\end{equation}
Next we compute $c_k$ for $k\geq 1$.  We have
\begin{equation}
\begin{array}{ll}
 c_{k} (x,y) =&\int_0^1 \{\sum_i \sum_{r=0}^{k-1} \dfrac{\partial c_r}{\partial x_i}\dfrac{\partial c_{k-1-r}}{\partial x_i}\\
\\
 &+ \Delta c_{k-1} + \sum_m b_{m} \dfrac{\partial c_{k-1}}{\partial x_m}\} (y+s(x-y))s^{k-1}ds.
\end{array}
\end{equation}
Assuming that $c_{k-1}$ equals its Taylor series for every $y\in {\mathbb R}^n$, i.e.
\begin{equation}
c_{k-1}(x)=\sum_{\gamma} c_{(k-1)\gamma}(y)\Delta x^{\gamma},
\end{equation}
then we may evaluate the derivatives occurring in $R_{k-1}$ as follows:
\begin{equation}
 \dfrac{\partial c^j_{k-1}}{\partial x_i}=\sum_{\gamma}(\gamma_i+1)c_{(k-1)(\gamma+1_i)}(y)\Delta x^{\gamma},
\end{equation}
and
\begin{equation}
\dfrac{\partial^2 c_{k-1}}{\partial x_i^2}=\sum_{\gamma}(\gamma_i+2)(\gamma_i+1)c_{k(\gamma+2_i)}(y)\Delta x^{\gamma},
\end{equation}
and
\begin{equation}
 \dfrac{\partial c_{r}}{\partial x_i}\dfrac{\partial c_{k-1-r}}{\partial x_i}=\sum_{\gamma}\left\lbrace \sum_{\beta + \alpha = \gamma} (\beta_i +1)(\alpha_i +1 )c_{r(\beta +1_i)}c_{(k-1-r)(\alpha +1_i)} \right\rbrace \Delta x^{\gamma}. 
\end{equation}
For the multiindex $\gamma$, we have
\begin{eqnarray}\label{abinom}
 P_k^{\gamma}(x,y) &:=&\int_0^1 (y + s(x-y))^{\gamma} s^{k-1}ds\\
&=&\int_0^1 \prod_{i=1}^n\left(\sum_{\delta_i =0}^{\gamma_i} \frac{\gamma_i !}{\delta_i ! (\gamma_i - \delta_i)!}y_i^{(\alpha_i - \delta_i)}\Delta x^{\delta_i} s^{\delta_i}\right)s^{k-1}ds\nonumber\\
&=&\int_0^1\sum_{\delta=0}^{\gamma}\left(  \prod_{i=1}^n\frac{\gamma_i !}{\delta_i ! (\alpha_i - \delta_i)!}y_i^{(\gamma_i - \delta_i)}\Delta x^{\delta_i}\right) s^{\delta} s^{k-1} ds \nonumber\\
&=&\sum_{\delta=0}^{\gamma}\frac{1}{\delta_{\Sigma} +k}\left[  \prod_{i=1}^n\left( \frac{\gamma_i !}{\delta_i ! (\gamma_i - \delta_i)!}\right)  y^{(\gamma - \delta)}\right] \Delta x^{\delta}\nonumber\\
&=:&\sum_{\delta=0}^{\gamma}p_{k\delta}^{y\gamma} \Delta x^{\delta}\nonumber
\end{eqnarray}
where $\displaystyle \delta_{\Sigma} := \sum_{i=1}^n \delta_i$ and $s^{\delta}=\Pi_{i=1}^{n}s^{\delta_i}=s^{\delta_{\Sigma}}$. Hence
\begin{equation}
\begin{array}{ll}
 c^j_{k} (x,y) =\\
\\
 \sum_{\gamma}{\big \{}\sum_i\sum_{\beta + \alpha = \gamma} (\beta_i +1)(\alpha_i +1 )c^j_{r(\beta +1_i)}c^j_{(k-1-r)(\alpha +1_i)}   \\
\\
 +\sum_i (\gamma_i+2)(\gamma_i+1)c_{k(\gamma+2_i)}+ 
\sum_{\beta + \alpha = \gamma}(\sum\frac{1}{\beta !}b^{j}_{lm,\beta}(y)\times\\
\\
(\alpha_i+1)c_{(k-1)(\alpha+1_i)}{\big \}}\left( \sum_{\delta=0}^{\gamma}p_{k\delta}^{y\gamma} \Delta x^{\delta}\right).  
\end{array}
\end{equation}

\section{Proof of convergence and computation of the time horizon of convergence}
It is essential to show that for fixed $x,y\in \Omega$ (where $\Omega$ is some bounded domain) 
\begin{equation}
t\rightarrow \sum_{k=0}^{\infty} c_k(x,y) t^k  
\end{equation}
represents some analytic function on some interval $ [0,\beta T_0]$ for some $\beta$ to be determined. Then it follows that
\begin{equation}
t\rightarrow  \sum_{k=0}^{\infty} d_k(x,y)t^k=\exp\left(\sum_{k=0}^{\infty} c_k(x,y) t^k \right) 
\end{equation}
is also an analytic function on that interval.

Since $\Omega\subset {\mathbb R}^n$ is bounded, there is a ball $B_R(0)$ around $0$ with radius $R$ such that $\Omega\subset B_R(0)$. Recall that
\begin{equation}\label{cj0}
 c_0(x,y)=\sum_m (y_m-x_m)\int_0^1 b_{m}(y+s(x-y))ds,
\end{equation}
and for all $k\geq 1$ we have
\begin{equation}
c_k(x,y)=\int_0^1 R_{k-1}(y+s(x-y),y)s^{k-1}ds
\end{equation}
with 
\begin{equation}\label{tk}
\begin{array}{ll}
R_{k-1}(t,x,y)=&\Delta c_{k-1}+\sum_{l=1}^n\sum_{r=0}^{k-1}\left( \frac{\partial}{\partial x_l}c_r\frac{\partial}{\partial x_l}c_{k-1-r}\right)\\
\\
&+\sum_{m} b_{m}(x)\frac{\partial}{\partial x_m}c_{k-1}.
\end{array}
\end{equation}
Let us first consider $c_0$. The coefficients of equation (\ref{finfou}) have a representation of the form

\begin{equation}\label{finfou2}
b_i(t,x)=\sum_{j=-m_0}^{m_0}e_j\exp (i(j^0t+j\cdot x))+f_j\exp(-i(j^0t+j\cdot x)),
\end{equation}
for some real numbers $e_j$ and $f_j$. Let
\begin{equation}
\overline{e}=\max\left\lbrace e_j,f_j|j\in J \right\rbrace 
\end{equation}
where $J$ is the set of all multiindices of the sum in (\ref{finfou2}). Then we get the estimate
\begin{equation}
\begin{array}{ll}
c_0(x,y)=\sum_{m=1}^n(y_m-x_m)\int_0^1 \sum_m b_{m}(y+s(x-y))ds\\
\\
\leq \overline{e}R(2m_0+1)=:C_0.
\end{array}
\end{equation} 
Next the time transformation
\begin{equation}
t=\beta \tau
\end{equation}
transforms the equation
\begin{equation}\label{parasyst3a}
\frac{\partial u}{\partial t}=\sum_{j=1}^n \frac{\partial^2 u}{\partial x_j^2} 
+\sum_{k} b_{k}\frac{\partial u}{\partial x_k}
\end{equation}
into the equation
\begin{equation}\label{parasyst3a} 
\frac{\partial v}{\partial \tau}=\beta\sum_{j=1}^n \frac{\partial^2 v}{\partial x_j^2} 
+\beta\sum_{k} b_{k}\frac{\partial v}{\partial x_k},
\end{equation}
where $u(t,x)=v(\tau,x)$, and where $\frac{\partial u}{\partial t}=\frac{\partial v}{\partial \tau}\frac{\partial \tau}{\partial t}=\frac{\partial v}{\partial \tau}\frac{1}{\beta}$.
The analogous representation of the solution is of the form
\begin{equation}\label{betapj}
p^{\beta}(t,x,y)=\frac{1}{\sqrt{4\pi \tau}^n}\exp\left(-\frac{\sum_{i=1}^n \Delta x_i^2}{4\beta\tau}\right)\left(1+\sum_k c_{k,\beta}(x,y)\tau^k \right),  
\end{equation}
for $j=1,\cdots ,n$. Plugging (\ref{betapj}) into (\ref{parasyst3a}) and collecting the terms with $\tau^{-2}$, $\tau^{-1}$ etc. we get (we feel free to write $t$ instead of $\beta \tau$ if convenient)
\begin{equation}\label{tauminus2}
\tau^{-2}:~~\frac{\sum_i \Delta x_i^2}{4\beta \tau^2}=\beta\sum_{l}\frac{ \Delta x_l^2}{4\beta^2 \tau^2}
\end{equation}

\begin{equation}\label{tauminus1}
\tau^{-1}:~~-\frac{n}{2t}=-\beta\sum_{l}\frac{1}{2\beta t}-\frac{\beta}{2\beta \tau}\left( \sum_l \Delta x_l\frac{\partial c_{0,\beta}}{\partial x_l}-\sum_{m} b_{m}(x)\Delta x_m\right) , 
\end{equation}
and for all $k-1\geq 0$
\begin{equation}\label{tk}
\begin{array}{ll}
\tau^{k-1}:~~kc_{k,\beta}+\beta\sum_l \Delta x_l\frac{\partial c_{k,\beta}}{\partial x_l}=&\beta\Delta c_{k-1,\beta}+\beta\sum_{l=1}^n\sum_{r=0}^{k-1}\left( \frac{\partial}{\partial x_l}c_{r,\beta}\frac{\partial}{\partial x_l}c_{k-1-r,\beta}\right)\\
\\
&+\beta\sum_{m} b_{m}(x)\frac{\partial}{\partial x_m}c_{k-1,\beta}=: R^{\beta}_{k-1}(x,y).
\end{array}
\end{equation} 
We divide equation (\ref{tk}) by $\beta$ and get the
solutions (the solution for $c_{0,\beta}$ equals exactly that for $c_0$ in (\ref{cj0}))
\begin{equation}\label{jbk}
c_{k,\beta}(x,y)=\int_0^1 R^{\beta}_{k-1}(y+s(x-y),y)s^{\frac{k}{\beta}-1}ds.
\end{equation}
Next we prove 
\begin{thm}
For a give time horizon $T_0>0$ there exists $\beta >0$ such that 
\begin{equation}
\mbox{for each}~~x,y\in \Omega, 1\leq j\leq n~~c_{k,\beta}(x,y)\downarrow 0 \mbox{ as } k\uparrow \infty.
\end{equation}
\end{thm}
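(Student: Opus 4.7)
The plan is to construct a real sequence $(M_k)_{k\geq 0}$ that uniformly majorises $c_{k,\beta}$ on $\overline{\Omega}\times\overline{\Omega}$ together with sufficiently many spatial derivatives, and to show $M_k\to 0$ geometrically for suitably small $\beta$. The decisive observation is that (\ref{jbk}) supplies two independent small factors of $\beta$ per recursion step: one already present inside $R^\beta_{k-1}$ from the time rescaling, and one coming from $\int_0^1 s^{k/\beta-1}ds = \beta/k$. These two factors compete with the Cauchy convolution produced by the quadratic term in (\ref{tk}), and ultimately decide the size of the admissible $\beta$.

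For the base case I would use (\ref{cj0}) together with (\ref{finfou}): because each spatial derivative of $b_m$ brings down at most a factor $|m_0|$, one has $\|D^\alpha b_m\|_{L^\infty(B_R(0))} \leq (2|m_0|+1)\bar e\,|m_0|^{|\alpha|}$, so that $\|D^\alpha c_0\|_\infty \leq M_0\,|m_0|^{|\alpha|}$ with $M_0\sim R\,n\,(2|m_0|+1)\bar e$. For the inductive step, assuming $\|D^\alpha c_{r,\beta}\|_\infty\leq M_r\,|m_0|^{|\alpha|}$ for every $r\leq k-1$, the three contributions to $R^\beta_{k-1}$ in (\ref{tk}) are controlled respectively by $\beta\,n|m_0|^2 M_{k-1}$ (Laplacian), $\beta\,n|m_0|^2\sum_{r=0}^{k-1}M_r M_{k-1-r}$ (quadratic), and $\beta\,n(2|m_0|+1)\bar e\,|m_0|\,M_{k-1}$ (drift). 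Differentiating (\ref{jbk}) under the integral sign and invoking the induction hypothesis preserves the $|m_0|^{|\alpha|}$ scaling (up to bounded Leibniz constants for a fixed number of derivatives) and contributes the further factor $\beta/k$, producing a majorising recursion of the form
\begin{equation}
M_k \;\leq\; \frac{\beta^2}{k}\Bigl[A\,M_{k-1} + B\sum_{r=0}^{k-1}M_r M_{k-1-r}\Bigr],
\end{equation}
with constants $A,B \sim n\,|m_0|^2\,(2|m_0|+1)\,\bar e\,R$.

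The main obstacle is the quadratic Cauchy convolution, which naively could cause factorial blow-up. The natural cure is to pass to the generating function $F(z)=\sum_{k\geq 0}M_k z^k$, which inherits a functional inequality of Cauchy--Kovalevskaya type, essentially $F(z)\leq M_0 + C\beta^2 z\bigl(F(z)+F(z)^2\bigr)$. Demanding the discriminant of the resulting quadratic in $F$ to remain positive on $[0,1]$ gives precisely an inequality for $\beta$ of the shape (\ref{beta0}): the factor $3$ accounts for the three terms of $R^\beta_{k-1}$; the $R^2$ collects one power of $R$ from the base estimate on $c_0$ and one from converting pointwise estimates on $R^\beta_{k-1}$ into uniform control on $\Omega\subset B_R(0)$; and $|m_0|^2$ records the two derivatives in the Laplacian and quadratic terms. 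Once $\beta$ is chosen below this threshold, $F$ is analytic on a disk strictly containing $1$, so $M_k\to 0$ geometrically, which forces $c_{k,\beta}(x,y)\downarrow 0$ uniformly on $\overline{\Omega}\times\overline{\Omega}$ and yields the theorem.
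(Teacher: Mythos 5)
Your overall strategy (a scalar majorant sequence plus a Cauchy--Kovalevskaya generating-function argument to absorb the quadratic convolution) is a genuinely different route from the paper, which instead unrolls the recursion into sums of compositions of three operator types applied to $c_0$ and estimates the dominant purely quadratic composition. But as written your inductive step has a real gap. The hypothesis $\|D^{\alpha}c_{r,\beta}\|_{\infty}\leq M_r|m_0|^{|\alpha|}$ with an $\alpha$-independent constant is not propagated by the recursion: the quadratic term produces, via Leibniz, factors $2^{|\alpha|}$, and, more fundamentally, products of trigonometric data add frequencies, so already $c_1$ contains oscillations at frequency $2|m_0|$ and in general $\sup|D^{\alpha}c_{k,\beta}|$ grows like $((k+1)|m_0|)^{|\alpha|}$ rather than $M_k|m_0|^{|\alpha|}$. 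Your caveat ``bounded Leibniz constants for a fixed number of derivatives'' does not apply, because the number of derivatives needed is not fixed: to bound $c_k$ itself the recursion consumes up to two derivatives per step, hence up to $2(k-r)$ derivatives of $c_r$, growing linearly in $k$. So the scalar recursion $M_k\leq\frac{\beta^2}{k}[AM_{k-1}+B\sum_r M_rM_{k-1-r}]$ is not established; you need a two-parameter bookkeeping (in the order $k$ and the derivative order), or an auxiliary majorant variable tracking spatial analyticity, or the paper's device of pushing all derivatives down onto the explicit data $c_0$, $b_i$ (where the factorial growth from repeated differentiation and frequency doubling is then cancelled by the accumulated factors $\prod_{j\leq k}\beta/j$ — exactly the cancellation your generating function is meant to encode, but which must be justified at the level of the derivative estimates first).

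Second, the claim of ``two independent small factors of $\beta$ per recursion step'' is not correct: the explicit $\beta$ inside $R^{\beta}_{k-1}$ is precisely what is removed when one divides the order-$\tau^{k-1}$ equation by $\beta$ to arrive at the solution formula (\ref{jbk}) (as written in the paper that formula is missing a prefactor $1/\beta$), so the net gain is a single factor $\beta/k$ per step. This is forced by scaling: substituting $t=\beta\tau$ gives $c_{k,\beta}=\beta^k c_k$ identically with $c_k$ independent of $\beta$, so a bound of the form $|c_{k,\beta}|\leq C^k\beta^{2k}/k!$ with $\beta$-independent $C$, valid for all sufficiently small $\beta$, would force $c_k\equiv 0$ by letting $\beta\downarrow 0$. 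The error does not affect the qualitative statement you are proving (one factor $\beta/k$ per step still makes your Riccati-type majorant work for small $\beta$), but it invalidates the way you recover the quantitative threshold (\ref{beta0}).
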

\begin{proof}
Next, a majorant of $c_{k,\beta}(x,y)$ is obtained as follows: we consider three types of operators $O^{1,n}_k, O^{2,n}_k, O^{3,n}_k$ with positive integers $k$, and acting on a single function $f:\Omega\times \Omega\rightarrow {\mathbb R}$ or on a families of functions $(f_l)_{1\leq l\leq k}:\Omega\times \Omega\rightarrow {\mathbb R}$, namely
\begin{equation}
\begin{array}{ll}
O^{1,n}_k\left[f\right](x,y):=\frac{\beta}{k}\Delta f(x,y)\\
\\
O^{2,n}_k\left[f_k,\cdots,f_1\right](x,y):=\frac{\beta}{k}\sum_{l=1}^n\sum_{r=0}^k \frac{\partial f_r}{\partial x_l}\frac{\partial f_{k-r}}{\partial x_l}\\
\\
O^{3,n}_k\left[f\right](x,y):=\frac{\beta}{k}\sum_{m} b_{m}(x)\frac{\partial}{\partial x_m}f(x,y).
\end{array}
\end{equation}
Let
\begin{equation}
M_k:=\left\lbrace (\alpha_k,\cdots ,\alpha_1)|\alpha_j\in \left\lbrace 1,2,3\right\rbrace \right\rbrace 
\end{equation}
For 
\begin{equation}
c^{up}_{k,\beta}:=\sup_{x,y\in\Omega}c_{k,\beta}(x,y) 
\end{equation}
we have
\begin{equation}\label{boundest}
c_{k,\beta}^{up}\leq \sup_{x,y\in\Omega}\sum_{\alpha\in M_k}O_k^{\alpha ,n}c_0(x,y).
\end{equation}
where
\begin{equation}
O^{\alpha ,n}_k\left[f\right](x,y):=O^{\alpha_k,n}_kO^{\alpha_{k-1},n}_{k-1}\circ\cdots\circ O^{\alpha_1,n}_1\left[f\right](x,y). 
\end{equation}

First let ${\bf 1}_k$ (resp. ${\bf 2}_k, {\bf 3}_k$) the multiindex $\alpha \in M_k$ such that for each $1\leq m\leq k$ $\alpha_m=1$ (resp. $\alpha_m=2, \alpha_m=3$). Hence
\begin{equation}\label{abb}
\begin{array}{ll}
O^{{\bf 1}}_k\left[f\right] (x,y)=\left( O^1\right)^k\left[f\right] (x,y)=\Delta^k\left[f\right] (x,y) 
\end{array}
\end{equation}
etc.. In order to compute this expression we first apply the Laplacian to $\exp(m_0ix)$. We get
\begin{equation}\label{eqlaplace}
{\big |}\Delta^k\exp(im_0x){\big |}\leq  n^{k}|m_{0}|^{2k},
\end{equation}
where $|m_0|:=\max_{j\in\left\lbrace 0,1,\cdots ,n \right\rbrace} m_0^j$.
Since there are $n$ drift functions $b_i$ the explicit representation of $c_0$ has $n(2|m_0|+1)$ terms with the factor $\exp(i(j\cdot x))$. Since the dimension is $n$ we have $n$ factors $\delta x_i:=(x_i-y_i)$ in the definition of $c_0$. Note that for $m\geq 0$
\begin{equation}
\begin{array}{ll}
\left( \frac{\partial^2}{\partial x_i^2}\right)^m \left( \delta x_i\exp(i(j\cdot x))\right) \\
\\
=m\left( \frac{\partial^2}{\partial x_i^2}\right)^{m-1}\exp(i(j\cdot x))+\delta x_i\left( \frac{\partial^2}{\partial x_i^2}\right)^{m}\exp(i(j\cdot x)).
\end{array}
\end{equation}  
Hence, 
\begin{equation}
\frac{1}{k!}\sup_{x,y\in \Omega}{\big |}\Delta^kc_0(x,y){\big |}\leq
n(2|m_0|+1)
 \overline{e}(k+R)\frac{\beta^k \left( n^{k}|m_{0}|^{2k}\right)}{k!}\downarrow 0
\end{equation}
as $k\uparrow \infty$ if
\begin{equation}
0<\beta \leq \frac{1}{n |m_0|^2}.
\end{equation}
Next we observe that on a domain of radius 
\begin{equation}
|O^{{\bf 3,n}}_k\left[c_0(x,y)\right] (x,y)|\leq 
\overline{e}^kR\frac{\beta^k k \left( k!n^{k}(2|m_{0}|+1)^{k}\right)}{k!}\downarrow 0
\end{equation}
as $k\uparrow \infty$
 if (note the strict inequality sign because of additional factor $k$)
\begin{equation}
0<\beta < \frac{1}{\overline{e}(n(2|m_0|+1))}.
\end{equation}
The operators of quadratic type applied to $c_0(x,y)$ $O^{2,n}_{k}c_0$ decrease also to zero as $k\uparrow \infty$ if $\beta$ is small. Inductively with respect to $k$ you show that
\begin{equation}
|O^{{\bf 2},n}_k\left[\exp(im_0x)\right] (x,y)|\leq |m_0|^{2k}3^k k!.
\end{equation}
We get
\begin{equation}\label{large}
\frac{1}{k!}|O^{{\bf 2},n}_k\left[c_0\right] (x,y)|\leq  \frac{\beta^k (n(2|m_0|+1))^k\overline{e}^kR^{2k}|m_0|^{2k}3^k k!}{k!}\downarrow 0
\end{equation}
for $k\uparrow \infty$ if
\begin{equation}\label{req3}
\beta <\frac{1}{3(n(2|m_0|+1))\overline{e}R^2|m_0|^{2}}
\end{equation}

For large $k$ this is essentially the largest term of all the $3^k$ contributions in the sum (\ref{boundest}) for large $k$ ($k$ fixed). You can show that if (\ref{req3}) holds, then
we have for $k\geq k_0$ (some $k_0>0$
\begin{equation}\label{essest}
|3^kO^{{\bf 2}}_k\left[c_0\right] (x,y)|\leq \frac{3^k\beta^k k^2 2^{k-1}n^kC^{k}k!(c_0)^{k+1}}{k!}\downarrow 0.
\end{equation}
as $k\uparrow \infty$, and this is also the estimate which holds for $c_k$ for large $k$. Here we choose $\beta$ such that in a summand in $O_k^{\alpha ,n}c_0(x,y)$ in (\ref{boundest}) each occurrence of an operator of form $O^{3,n}_k$ can be replaced by an operator of form  $O^{2,n}_k$ in order to get a majorant estimation. So in the sum in (\ref{boundest}) it suffices to concentrate on the summands consisting of concatenations of operators of form $O^{2,n}_k$ and $O^{1,n}_k$. For natural numbers $l$ let us define an increasing sequence of numbers $k_1<k_2<\cdots <k_l<k_{l+1}\cdots$, and operators
\begin{equation}
\begin{array}{ll}
O^{1,n}_{k_{l+1}k_l}:=O^{1,n}_{k_{l+1}}\circ\cdots \circ O^{1,n}_{k_l}\\
\\
O^{2,n}_{k_{l+1}k_l}:=O^{2,n}_{k_{l+1}}\circ\cdots \circ O^{2,n}_{k_l}
\end{array}
\end{equation}
Then in the summands o (\ref{boundest}) we have to consider the asymptotic behavior of values of family of operators of form
\begin{equation}\label{fam1}
O^{2,n}_{k_{l+1}k_l}\circ O^{1,n}_{k_{l}k_{l-1}}\circ \cdots \circ O^{2,n}_{k_{3}k_2}\circ O^{1,n}_{k_{2}k_1}
\end{equation}
or of form
\begin{equation}\label{fam2}
O^{2,n}_{k_{l+1}k_l}\circ O^{1,n}_{k_{l}k_{l-1}}\circ \cdots \circ O^{1,n}_{k_{3}k_2}\circ O^{2,n}_{k_{2}k_1}
\end{equation}
applied to $c_0(x,y)$ as $k\uparrow \infty$. If there is only a finite occurrence of operators of form $O^{1,n}_k$ in such a family ((\ref{fam1}) or (\ref{fam2})), then the asymptotic behavior is clearly the same as for 
$O^{{\bf 2}}_k c_0(x,y)$. 
If on the other hand there are infinite occurrences of operators of form $O^{1,n}_k$ 
in ((\ref{fam1}) of (\ref{fam2})), then for large $k$ $O^{{\bf 2}}_k c_0(x,y)$ becomes a 
majorant of such a summand. Hence, the estimate (\ref{essest}) is a majorant for large $k$ and proves the convergence of the series in (\ref{boundest}). 
\end{proof}

\section{Generalisation to the time-inhomogeneous case }
We start with the formal computation in the time-inhomogeneous case and then show how the local convergence proof of the previous section can be extended.

\subsection{Formal computation of recursive coefficients in the time-inhomogeneous case}
We consider parabolic equations with time-dependent coefficients of the form
\begin{equation}
\frac{\partial u}{\partial t}+\Delta u +\sum_{k} b_{k}(t,x)\frac{\partial u}{\partial x_k}=0
\end{equation}
We consider the ansatz
\begin{equation}
p(t,x,0,y)=\frac{1}{\sqrt{4\pi t}^n}\exp\left(-\frac{\Delta x^2}{4t}+ \sum_{k=0}^{\infty}c_k(t,x,y)t^k\right).
\end{equation}
Compared to the time-homogeneous case the time derivative contains an additional term. We have
\begin{equation}
\begin{array}{ll}
\frac{\partial p}{\partial t}(t,x,0,y)=
{\Bigg (}-\frac{n}{2t}+\frac{\sum_i \Delta x_i^2}{4t^2}+\sum_{k=0}^{\infty}\frac{\partial c_k}{\partial t}(t,x,y)t^k\\
\\
\hspace{5.5cm}+\sum_k kc_k(t,x,y)t^{k-1}{\Bigg )}p (t,x,0,y)
\end{array}
\end{equation}
The spatial derivatives are essentially the same as in the time-homogeneous case. We compute
\begin{equation}
\frac{\partial p}{\partial x_l}(t,x,y)=\left(\frac{-\Delta x_l}{2t}+\sum_k \frac{\partial}{\partial x_l}c_k(t,x,y)t^k \right) p(t,x,0,y),
\end{equation}
and
\begin{equation}
\begin{array}{ll}
\frac{\partial^2 p}{\partial x_l^2}(t,x,y)=&{\Bigg (}-\frac{1}{2t}+\sum_k \frac{\partial^2}{\partial x_l^2}c_k(t,x,y)t^k \\
\\
&+\left(-\frac{\Delta x_l}{2t}+\sum_k \frac{\partial}{\partial x_l}c_k(t,x,y)t^k \right)^2{\Bigg )} p(t,x,0,y).
\end{array}
\end{equation}
Plugging into equation (\ref{parabol1}) and ordering with respect to the terms $t^{-2},t^{-1}$ etc. we get the following
recursive relations for the $c^j_k$, where $1\leq j\leq n$. First, the highest order terms are the same as before:
\begin{equation}\label{tminus2}
t^{-2}:~~\frac{\sum_i \Delta x_i^2}{4t^2}=\sum_{l}\frac{ \Delta x_l^2}{4t^2}
\end{equation}
The terms of order $t^{-1}$ are essentially as before (we just have to add the $t$-argument in the coefficient functions $b_{j}$):
\begin{equation}\label{tminus1}
t^{-1}:~~-\frac{n}{2t}=-\sum_{l}\frac{1}{2t}-\frac{1}{2t}\left( \sum_l \Delta x_l\frac{\partial c_0}{\partial x_l}-\sum_{m} b_{m}(t,x)\Delta x_m\right). 
\end{equation}
For $k-1\geq 0$ we get an additional $t$-derivative on the right side:
\begin{equation}\label{timetk}
\begin{array}{ll}
t^{k-1}:~~kc_k+\sum_l \Delta x_l\frac{\partial c_k}{\partial x_l}=\frac{\partial c_{k-1}}{\partial t}+\Delta c_{k-1}+\sum_{l=1}^n\sum_{r=0}^{k-1}\left( \frac{\partial}{\partial x_l}c_r\frac{\partial}{\partial x_l}c_{k-1-r}\right)\\
\\
+\sum_{m} b_{m}(t,x)\frac{\partial}{\partial x_m}c_{k-1}\equiv R_{k-1}(x,y)
\end{array}
\end{equation}
Hence,
\begin{equation}
\sum_l \Delta x_l\frac{\partial c_0}{\partial x_l}=-\sum_{m} b_{m}(t,x)\Delta x_m, 
\end{equation}
which has the solution
\begin{equation}
 c_0(x,y)=\sum_m (y_m-x_m)\int_0^1 \sum_lb_{l}(t,y+s(x-y))ds,
\end{equation}
and for all $k\geq 1$ we have
\begin{equation}
c_k(x,y)=\int_0^1 R_{k-1}(t,y+s(x-y),y)s^kds
\end{equation}
with $R_{k-1}$ as in equation \eqref{timetk}.
The explicit calculation of the solution is know completely analogous, so it suffices to write down the results.
We write
\begin{equation}
b_{m}(t,x)=\sum_{\gamma} \frac{1}{\gamma !}b_{m,\gamma}(t,y)(\Delta x)^{\gamma}
\end{equation}  
along with some multiindex $\gamma$. Then
\begin{equation}
\begin{array}{ll}
 c_0(t,x,y)&=-\sum_{m} \sum_{\gamma}b_{m\gamma}(y)\Delta x^{\gamma+1_i}\frac{1}{1+|\gamma|}\\
\\
&\equiv \sum_{\gamma} c_{0\gamma}(t,y)\Delta x^{\gamma}
\end{array}
\end{equation}
Given that $c_{k-1}$ equals its Taylor series for every $y\in {\mathbb R}^n$, i.e.
\begin{equation}
c_{k-1}(t,x)=\sum_{\gamma} c_{(k-1)\gamma}(t,y)\Delta x^{\gamma}=\sum_{\gamma,l}c_{(k-1)\gamma l}(y)\Delta x^{\gamma}t^l,
\end{equation}
we have
\begin{equation}
\begin{array}{ll}
 c_{k} (t,x,y) =\sum_{\gamma,l}lc_{(k-1)\gamma l}(y)\Delta x^{\gamma}t^l\\
\\
+\sum_{\gamma}{\big \{}\sum_i\sum_{\beta + \alpha = \gamma} (\beta_i +1)(\alpha_i +1 )c_{r(\beta +1_i)}(t,y)c_{(k-1-r)(\alpha +1_i)}(t,y)   \\
\\
 +\sum_i (\gamma_i+2)(\gamma_i+1)c_{k(\gamma+2_i)}+ 
\sum_{\beta + \alpha = \gamma}(\sum\frac{1}{\beta !}b_{m,\beta}(t,y)\times\\
\\
(\alpha_i+1)c_{(k-1)(\alpha+1_i)}{\big \}}\left( \sum_{\delta=0}^{\gamma}p_{k\delta}^{y\gamma} \Delta x^{\delta}\right),  
\end{array}
\end{equation}
where the $p_{k\delta}^{y\gamma}$ are defined exactly as before. 
The proof of convergence is then analogue to the time-inhomogeneous case.

\section{Generalisation in the case of variable coefficients }

In order to provide a first discussion to various types of semi-elliptic equations let us formally compute the local expansion 
\begin{equation}\label{b}
p(\delta t,x,y)=\frac{1}{\sqrt{2\pi t}^n}\exp\left(-\frac{d^2(x,y)}{2 t}+\sum_{k= 0}^{\infty}c_k(x,y) t^k\right), 
\end{equation}
of the  parabolic equation
\begin{equation}\label{equationaaa1}
\frac{\partial u}{\partial  t}-\frac{1}{2}\sum_{ij}a^*_{ij}(x)\frac{\partial^2 u}{\partial x_i\partial x_j}-\sum_i b_i(x)\frac{\partial u}{\partial x_i}=0:=Lu
\end{equation}

First we compute for $ t>0$
\begin{equation}
\begin{array}{llll}
\hspace{0.4cm}\frac{\partial p}{\partial  t}=\left( -\frac{n}{2 t}+\frac{d^2(x,y)}{2 t^2}+\sum_{k=0}^{\infty}(k+1)c_{k+1}(x,y) t^k  \right)p, \\
\\
\hspace{0.4cm}\frac{\partial p}{\partial x_i}=\left( -\frac{d^2_{x_i}}{2t}+\sum_{k=0}^{\infty}\frac{\partial c_{k}}{\partial x_i}(x,y)t^k  \right)p,\\ 
\\
\frac{\partial^2 p}{\partial x_i \partial x_j}=\Big( \left( -\frac{d^2_{x_i}}{2t}+\sum_{k=0}^{\infty}\frac{\partial c_{k}}{\partial x_i}(x,y)t^k  \right)
\left( -\frac{d^2_{x_j}}{2t}+\sum_{k=0}^{\infty}\frac{\partial c_{k}}{\partial x_j}(x,y)t^k\right) \\
\\  
\hspace{2cm}-\frac{d^2_{x_ix_j}}{2t}+\sum_{k=0}^{\infty}\frac{\partial^2 c_k}{\partial x_i\partial x_j}t^k \Big) p.
\end{array}
\end{equation}
Plugging into \eqref{equationaaa1} we get 
\begin{equation}
\begin{array}{ll}
\Big(-\frac{n}{2t}+\frac{d^2(x,y)}{2t^2}+\sum_{k=0}^{\infty}(k+1)c_{k+1}(x,y)t^k
-\frac{1}{2}\sum_{ij} a^*_{ij}(x)\times \\
\\
\Big( \left( -\frac{d^2_{x_i}}{2t}+\sum_{k=0}^{\infty}\frac{\partial c_{k}}{\partial x_i}(x,y)t^k  \right)\left( -\frac{d^2_{x_j}}{2t}+\sum_{k=0}^{\infty}\frac{\partial c_{k}}{\partial x_j}(x,y)t^k\right)
-\frac{d^2_{x_ix_j}}{2t}\\
\\
+\sum_{k=0}^{\infty}\frac{\partial^2 c_k}{\partial x_i\partial x_j}t^k \Big)\Big)
-\sum_i b_i(x)\left( -\frac{d^2_{x_i}}{2t}+\sum_{k=0}^{\infty}\frac{\partial c_{k}}{\partial x_i}(x,y)t^k  \right)\Big)p=\\
\end{array}
\end{equation}
\begin{equation}
\begin{array}{ll}
\Big(-\frac{n}{2t}+\frac{d^2(x,y)}{2t^2}+\sum_{k=0}^{\infty}(k+1)c_{k+1}(x,y)t^k\\
\\
-\frac{1}{2}\sum_{ij} a^*_{ij}(x)\Bigg[ \frac{d^2_{x_i}}{2t}\frac{d^2_{x_j}}{2t}
-\frac{d^2_{x_i}}{2t}\left( \sum_{k=0}^{\infty}\frac{\partial c_k}{\partial x_j}t^k\right)
-\frac{d^2_{x_j}}{2t}\left( \sum_{k=0}^{\infty}\frac{\partial c_k}{\partial x_i}t^k\right)\\
\\
+\sum_{k=0}^{\infty}\left(\sum_{l=0}^{k}\frac{\partial c_l}{\partial x_i} \frac{\partial c_{k-l}}{\partial x_j}\right)t^k
 
-\frac{d^2_{x_ix_j}}{2t}+\sum_{k=0}^{\infty}\frac{\partial^2 c_k}{\partial x_i\partial x_j}t^k \Bigg]\\
\\ 
-\sum_i b_i(x)\left( -\frac{d^2_{x_i}}{2t}+\sum_{k=0}^{\infty}\frac{\partial c_{k}}{\partial x_i}(x,y)t^k  \right)\Big)p=
0 .
\end{array}
\end{equation}
Collecting terms of order $t^{-2}$ we have
\begin{equation}\label{ed}
d^2=\frac{1}{4}\sum_{ij}d^2_{x_i}a^*_{ij}d^2_{x_j}.
\end{equation}
Note that here $d^2_{x_i}$ is the partial derivative of $d^2$ with respect to $x_i$.
Equation (\ref{ed}) is closely connected to a Hamilton-Jacobi equation and admits a unique solution if the boundary condition, i.e. the condition $d(x,y)=0$ if $x=y$, is satisfied. 
Collecting terms of order $t^{-1}$ we get
\begin{equation}\label{1ga}
-\frac{n}{2}+\frac{1}{2}Ld^2+\frac{1}{2}\sum_{ij} a^*_{ij}(x)\Big(\frac{d^2_{x_j}}{2}\frac{\partial c_{0}}{\partial x_i}(x,y)+\frac{d^2_{x_i}}{2}\frac{\partial c_{0}}{\partial x_j}(x,y) \Big)=0.
\end{equation}
Equation \eqref{1ga} is a linear first order equation which can be written as 
\begin{equation}\label{c01e}
-\frac{n}{2}+\frac{1}{2}Ld^2+\frac{1}{2}\sum_{i} \left( \sum_j\left( a^*_{ij}(x)+a^*_{ji}(x)\right) \frac{d^2_{x_j}}{2}\right) \frac{\partial c_{0}}{\partial x_i}(x,y)=0.
\end{equation}
As we shall see later, this equation together with some boundary condition 
\begin{equation}\label{c01b}
c_0(y,y)=-\frac{1}{2}\ln \sqrt{\mbox{det}\left(a^{*ij}(y) \right) }
\end{equation}
determines $c_0$ uniquely for each $y\in {\mathbb R}^n$. In general, the boundary condition on $c_0$ ensures that $p$ is a probability density (i.e. integrates to 1). This is not essential as far as existence, uniqueness, and convergence of the coefficient functions $c_k$ are concerned. If we define $c_0(x,y)-c_0(y,y)=:\overline{c}_0(x,y)$ then $\overline{c}_0$ satisfies the equation \eqref{c01e} too with the boundary condition $c_0(x,y)=0$ if $x=y$.
For $k+1\geq 1$ we get
\begin{equation}\label{1gaa}
\begin{array}{ll}
(k+1)c_{k+1}(x,y)+\frac{1}{2}\sum_{ij} a^*_{ij}(x)\Big(
\frac{d^2_{x_i}}{2}\frac{\partial c_{k+1}}{\partial x_j}
+\frac{d^2_{x_j}}{2} \frac{\partial c_{k+1}}{\partial x_i}\Big)\\
\\
=\frac{1}{2}\sum_{ij}a^*_{ij}(x)\sum_{l=0}^{k}\frac{\partial c_l}{\partial x_i} \frac{\partial c_{k-l}}{\partial x_j}
+\frac{1}{2}\sum_{ij}a^*_{ij}(x)\frac{\partial^2 c_k}{\partial x_i\partial x_j}  
+\sum_i b_i(x)\frac{\partial c_{k}}{\partial x_i},
\end{array}
\end{equation}
with the boundary conditions
\begin{equation}\label{Rk}
c_{k+1}(x,y)=R_k(y,y) \mbox{ if }~~x=y,
\end{equation}
$R_k$ being the right side of \eqref{1gaa}.  

We have  
\begin{thm}
If the assumptions (\ref{coeffa}) and (\ref{coeffb}) are satisfied, then there exists a finite time horizon $T_0$ such that on the domain $\Omega\times (0,T_0]$ for any finite $T_0>0$ and any domain $\Omega\subseteq {\mathbb R}^n$ a constant $\beta$ can be computed such that the fundamental solution of 
\begin{equation}\label{parasystthm}
\frac{\partial u}{\partial t}=\sum_{i,j=1}^n a^*_{ij}\frac{\partial^2 u}{\partial x_j^2} 
+\sum_{i=1}^n b_{i}\frac{\partial u}{\partial x_i}
\end{equation}
has the pointwise valid representation
\begin{equation}\label{pj}
p(t,x,0,y)=\frac{1}{\sqrt{4\pi t(\tau)}^n}\exp\left(-\frac{\sum_{i=1}^n \Delta x_i^2}{4t}\right)\left(\sum_{k=0}^{\infty}d_{k}(t,x,y)t^k \right),  
\end{equation}
for $j=1,\cdots ,n$, and for $(t,x) \in(0,\beta T_0)\times \Omega$. If (\ref{finfou}) and (\ref{finfoua}) hold then a lower bound of the constant $\beta$ is given by
\begin{equation}\label{beta0}
\beta <\frac{1}{6(n(2|m_0|+1))\overline{e}R^2|m_0|^{2}},
\end{equation}
where $2|m_0|+1$ is (an upper bound of) the number of terms in the finite Fourier representation of $b_i$ (any $i\in \left\lbrace 1,\cdots ,n\right\rbrace$ along with $|m_0|:=\max_{j\in \left\lbrace 0,\cdots n\right\rbrace}m^j_0$ and $R$ is a radius of a ball $B_R(0)$ such that the spatial part of the domain $\Omega$ is included, i.e. $B_R(0)\supseteq \Omega$, and $\overline{e}$ is now an upper bound for the Fourier coefficients of the Fourier representation of the drift function $b_i$ where $i\in \left\lbrace 1,\cdots ,n\right\rbrace$ and of the Fourier coefficients of the Fourier representation of the diffusion functions $a_{ij},~1\leq i,j\leq n$.  
For the coefficient functions $d_k$ the following holds: for $k=0$ we have
\begin{equation}\label{c0}
 d_{0}(x,y)= \exp\left( c_0(x,y)\right) ,
\end{equation}
\begin{equation}
d_m(t,x,y)=\sum_{k=1}^m \frac{k}{m}d_{m-k}c_k,
\end{equation}
where $c_k$ is defined as above.
\end{thm}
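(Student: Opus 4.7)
The plan is to parallel the strategy used in Sections 2--4 for the reducible case, but with the Laplacian replaced by the variable-coefficient operator $\sum_{ij} a^*_{ij}\partial_i\partial_j$. First, I would adopt the formal WKB expansion of Section 6: plugging the ansatz \eqref{b} into \eqref{equationaaa1} and collecting powers of $t$ produces the eikonal equation \eqref{ed} at order $t^{-2}$, the transport equation \eqref{c01e} at order $t^{-1}$, and for each $k \geq 0$ a first-order linear PDE \eqref{1gaa} for $c_{k+1}$ with source $R_k$ built from previously computed $c_j$'s, the coefficients $a^*_{ij}$, $b_i$, and derivatives of $d^2$. Using the boundary conditions \eqref{c01b} and \eqref{Rk}, each of these first-order transport equations can be integrated along the characteristics that radiate from the diagonal $\{x = y\}$, producing a recursive analytic expression for $c_k(x,y)$ exactly as in Section 3; analyticity of the $c_k$ follows from analyticity of $a^*_{ij}$, $b_i$ and (crucially) of $d^2$.

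Next, to prove convergence of $\sum c_k(x,y) t^k$ on $(0,\beta T_0) \times \Omega$, I would adapt the operator-majorant scheme of Section 4. Under the time rescaling $t = \beta\tau$ the recursion acquires an overall factor $\beta/k$, and the natural list of operators is enlarged to
\begin{equation}
O^{1,n}_k[f] := \frac{\beta}{k}\sum_{ij} a^*_{ij}(x) \frac{\partial^2 f}{\partial x_i \partial x_j}, \quad O^{2,n}_k[f_0,\dots,f_k] := \frac{\beta}{k}\sum_{ij} a^*_{ij}(x)\sum_{r=0}^k \frac{\partial f_r}{\partial x_i}\frac{\partial f_{k-r}}{\partial x_j},
\end{equation}
together with the drift operator $O^{3,n}_k$ unchanged. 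On the finite-Fourier ansatz \eqref{finfoua} the bound \eqref{eqlaplace} is replaced by $|\sum_{ij} a^*_{ij}(x) \partial^2_{x_ix_j} \exp(i m_0 \cdot x)| \le n(2|m_0|+1)\overline{e}\, n|m_0|^2$, which carries an extra multiplicative factor $n(2|m_0|+1)\overline{e}$ compared with the reducible case. Propagating this through the bookkeeping in \eqref{large}--\eqref{essest} doubles the combinatorial weight in the dominant term, yielding precisely the factor $6$ in the denominator of \eqref{beta0} (in place of the $3$ obtained in Section 4).

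Once analyticity of the series $\sum c_k t^k$ is established on $(0,\beta T_0) \times \Omega$, the logarithmic recursion $d_0 = \exp(c_0)$, $d_k = \sum_{i=1}^k (i/k)\, d_{k-i} c_i$ transfers analyticity to the $d_k$-series on the same interval, proving the claimed representation of $p$.

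The main obstacle is the eikonal equation \eqref{ed}: its solution $d^2$, together with its higher spatial derivatives, enters every source term $R_k$, so the convergence argument needs $d^2$ to be analytic on a sufficiently large neighborhood of the diagonal with derivative bounds comparable to the $C^{n+|\alpha|}$ growth assumed in \eqref{coeffa}. After symmetrization one reduces \eqref{ed} to the form \eqref{eikonal2}, which is a Hamilton--Jacobi equation whose local analytic solvability near the diagonal can be obtained by the method of characteristics from the analyticity of the $a^*_{ij}$; a deeper global analysis is what the author defers to the subsequent paper. Granting this regularity as a hypothesis (as indicated in the text preceding the theorem), the rest of the argument is a routine adaptation of the reducible case, with the stated loss of a factor of two in the convergence radius.
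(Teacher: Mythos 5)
The paper itself does not prove this theorem: Section 6 only carries out the formal computation (the eikonal equation \eqref{ed}, the transport equation \eqref{c01e} for $c_0$, and the recursion \eqref{1gaa}), and the introduction states explicitly that the extension of the convergence proof hinges on a deeper analysis of \eqref{eikonal} and is deferred to a subsequent paper. Your proposal does not close that gap; it relocates it. You grant, as an added hypothesis, analyticity of $d^2$ together with derivative bounds compatible with the majorant scheme, but the theorem as stated assumes only \eqref{coeffa} and \eqref{coeffb}; producing those bounds (uniformly on $\Omega\times\Omega$ and for derivatives of all orders, since derivatives of $d^2$ of increasing order enter both the coefficients and the sources of every equation \eqref{1gaa}) is exactly the missing ingredient, not a side condition one may assume. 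Moreover, the reducible-case machinery you propose to reuse depends structurally on the transport operator being $k c_k+\sum_l \Delta x_l \partial_{x_l} c_k$, which is solved by an explicit integral along the straight segment from $y$ to $x$ and which generates the $\beta/k$ prefactors in the operators $O^{1,n}_k, O^{2,n}_k, O^{3,n}_k$ of Section 4. In the variable-coefficient case the characteristic field is $\tfrac12\sum_{ij}a^*_{ij}\,d^2_{x_j}\partial_{x_i}$, i.e.\ integration runs along geodesics of the metric \eqref{line} rather than straight lines, and the first-order operator itself carries $d^2$ and its derivatives as coefficients; so the operator-majorant bookkeeping does not transfer by merely enlarging the operator list, and your claim that the remainder is a \emph{routine} adaptation is not justified.

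Second, your derivation of the constant in \eqref{beta0} is asserted rather than proved. You claim the second-order and quadratic operators acquire an extra multiplicative factor of order $n(2|m_0|+1)\overline{e}$ relative to the Laplacian case and that this ``doubles the combinatorial weight,'' but an extra factor $n(2|m_0|+1)\overline{e}$ in the dominant estimate would shrink the admissible $\beta$ by precisely that factor, not by $2$; nothing in your bookkeeping produces the specific replacement of $3$ by $6$. Since the paper states the bound without any derivation, your constant is reverse-engineered to match the statement rather than obtained from the estimates you set up, and as written it does not follow from them.
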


\section{Application to linear semi-elliptic equations}

First we consider the application to (micro)-hypoelliptic equations. Then we consider the application to semi-elliptic equations which are (micro)-hypoelliptic on a linear subspace. More details about the analysis of semi-linear equations will be provided in version 3 of \cite{FKSE} (which will appear in arXiv shortly).

\subsection{Application to (micro)-hypoelliptic equations}

In the situation of linear semi-elliptic equations described in the introduction consider the case $n=d$, i.e. consider a matrix-valued function $x\rightarrow (a_{ji})^{d,m}(x),~0\leq i,j\leq n$ on ${\mathbb R}^n$, and $m+1$ smooth vector fields of dimension $d$ 
\begin{equation}
A_i=\sum_{j=1}^d a_{ji}\frac{\partial}{\partial x_j},~1\leq j\leq m,
\end{equation}
where $0\leq i\leq m$.
 Consider the 
Cauchy problem on $[0,T]\times {\mathbb R}^d$ with time horizon $T>0$:
\begin{equation}
	\label{projectiveHoermanderSystem}
	\left\lbrace \begin{array}{ll}
		\frac{\partial u}{\partial t}=\frac{1}{2}\sum_{i=1}^mA_i^2u+A_0u\\
		\\
		u(0,x)=f(x).
	\end{array}\right.
\end{equation}
Assume that (\ref{projectiveHoermanderSystem}) satisfies the H\"{o}rmander condition with respect to the subspace ${\mathbb R}^d$, i.e. assume that
\begin{equation}\label{Hoer*}
\left\lbrace A_i, \left[A_j,A_k \right], \left[ \left[A_j,A_k \right], A_l\right],\cdots |1\leq i\leq m,~0\leq j,k,l\cdots \leq m \right\rbrace 
\end{equation}
spans ${\mathbb R}^d$ at each point $x$. The existence of regular solutions of the Cauchy problem in this case is well known (cf. \cite{H} ). Indeed H\"{o}rmander's result shows us that there exists a family of smooth transition densities if (\ref{Hoer*}) holds for every $x\in {\mathbb R}^d$ . 
Indeed we may extract the following Malliavin estimate of the density from \cite{S}.
\begin{thm}\label{stroock}
Consider an $n$-dimensional diffusion process associated to (\ref{projectiveHoermanderSystem}) of the form
\begin{equation}
dX_t=\sum_{i=1}^nb_i(X_t)dt+\sum_{j=1}^{d}\sigma_{ij}(X_t)dW^j_t
\end{equation}
with $X(0)=x\in {\mathbb R}^n$ with values in ${\mathbb R}^n$ and on a time interval $[0,T]$,
i.e. assume that the solution of the Cauchy problem (\ref{projectiveHoermanderSystem}) has the probabilistic representation
\begin{equation}
 u(t,x)=E^x\left(f(X_t)\right) 
\end{equation}

Assume that $b_i,\sigma_{ij}\in C^{\infty}_{lb}$. Then the law of the process $X$ is absolutely continuous with respect to the Lebesgue measure and the density $p$ exists and is smooth, i.e. 
\begin{equation}
\begin{array}{ll}
p:(0,T]\times {\mathbb R}^n\times{\mathbb R}^n\rightarrow {\mathbb R}\in C^{\infty}\left( (0,T]\times {\mathbb R}^n\times{\mathbb R}^n\right). 
\end{array}
\end{equation}
Moreover, for each nonnegative natural number $j$, and multiindices $\alpha,\beta$ there are increasing functions of time
\begin{equation}\label{constAB}
A_{j,\alpha,\beta}, B_{j,\alpha,\beta}:[0,T]\rightarrow {\mathbb R},
\end{equation}
and functions
\begin{equation}\label{constmn}
n_{j,\alpha,\beta}, 
m_{j,\alpha,\beta}:
{\mathbb N}\times {\mathbb N}^n\times {\mathbb N}^n\rightarrow {\mathbb N},
\end{equation}
such that 
\begin{equation}\label{KEest}
|D^{j}_t D^{\alpha}_xD^{\beta}_yp(t,x,y)|\leq \frac{A_{j,\alpha,\beta}(1+x)^{m_{j,\alpha,\beta}}}{t^{n_{j,\alpha,\beta}}}\exp\left(-B_{j,\alpha,\beta}(t)\frac{(x-y)^2}{t}\right) 
\end{equation}
Moreover, all functions (\ref{constAB}) and  (\ref{constmn}) depend on the level of iteration of Lie-bracket iteration at which the H\"{o}rmander condition becomes true.
\end{thm}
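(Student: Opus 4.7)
The plan is to follow the classical Kusuoka--Stroock programme: first invoke Malliavin calculus together with H\"ormander's hypoellipticity to produce a smooth density, then extract pointwise derivative bounds via the integration by parts formula on Wiener space, and finally overlay the sub-Gaussian factor through a Girsanov change of measure.

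First I would set up the Malliavin calculus for the SDE. Under $b_i,\sigma_{ij}\in C^{\infty}_{lb}$ the solution $X_t$ belongs to $\mathbb{D}^{\infty}:=\bigcap_{k,p}\mathbb{D}^{k,p}$, and its Malliavin derivatives satisfy a linear SDE driven by the Jacobian of the flow, exactly as recalled in the introduction for the process $Z_t$. Let $N$ denote the first iteration level at which the brackets in (\ref{Hoer*}) span $\mathbb{R}^n$. Then the Malliavin covariance $\sigma_t$ (in the sense of the introduction) satisfies $(\det\sigma_t)^{-1}\in L^p$ for every $p$, with $L^p$-norm controlled in terms of $N$, $t$, and a finite number of sup-norms of the vector fields and their derivatives. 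Combined with the chain rule on Wiener space, this yields the representation
\begin{equation*}
D^{j}_tD^{\alpha}_xD^{\beta}_yp(t,x,y)=E^x\bigl[\delta_y(X_t)\,H_{j,\alpha,\beta}(t,X_t)\bigr],
\end{equation*}
where $H_{j,\alpha,\beta}$ is a Malliavin weight obtained by iterated integration by parts against the inverse covariance. Standard $L^p$-estimates for these weights grow polynomially in $(1+|x|)$ and as inverse powers of $t$, producing the prefactor $A_{j,\alpha,\beta}(t)(1+|x|)^{m_{j,\alpha,\beta}}/t^{n_{j,\alpha,\beta}}$ of (\ref{KEest}).

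To extract the Gaussian decay I would, for each fixed $(x,y)$, introduce a Cameron--Martin shift $h_{x,y}\in H^1([0,t];\mathbb{R}^d)$ whose associated deterministic skeleton sends $x$ to $y$ at time $t$, with $\|h_{x,y}\|_{H^1}^2$ comparable to $(x-y)^2/t$; the quantitative input here is that the H\"ormander condition provides a sub-Riemannian control distance which can be bounded in terms of the Euclidean distance up to constants depending on $N$. Applying an exponential Girsanov weight $\exp\bigl(-\int_0^t\dot h\,dW-\tfrac12\|h\|^2\bigr)$ in the representation above, and optimizing over admissible shifts, produces the factor $\exp(-B_{j,\alpha,\beta}(t)(x-y)^2/t)$ with $B_{j,\alpha,\beta}$ a time-increasing function.

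The main obstacle will be the fine integrability of $(\det\sigma_t)^{-1}$ as $t\downarrow 0$, with an explicit exponent that deteriorates with the Lie bracket iteration level $N$. This requires the Norris--Kusuoka lemma applied $N$ times, each application losing a power of $t$; tracking this loss through the Malliavin weights $H_{j,\alpha,\beta}$ produces the functions $n_{j,\alpha,\beta}$ and $m_{j,\alpha,\beta}$ of (\ref{constmn}), while the time-monotonicity of $A_{j,\alpha,\beta}$ and $B_{j,\alpha,\beta}$ follows from the standard boot-strap of the Girsanov exponent as $t$ grows. Once these technical inputs are secured, the remaining bounds reduce to routine applications of H\"older's inequality on Wiener space.
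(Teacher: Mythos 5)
The paper itself offers no proof of this theorem: it is explicitly \emph{extracted} from Kusuoka and Stroock \cite{S}, so the only meaningful comparison is with that literature proof. Your overall architecture --- Malliavin differentiability of the flow, $L^p$-integrability of the inverse Malliavin covariance with exponents governed by the bracket level $N$ at which (\ref{Hoer*}) saturates, the Norris lemma applied $N$ times to track the loss of powers of $t$, and iterated integration by parts producing weights whose $L^p$-norms give the prefactor $A_{j,\alpha,\beta}(1+|x|)^{m_{j,\alpha,\beta}}/t^{n_{j,\alpha,\beta}}$ --- is indeed the skeleton of the Kusuoka--Stroock argument, and that part of the sketch is sound.

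Your mechanism for the Gaussian factor, however, has a genuine gap. You construct a Cameron--Martin shift $h_{x,y}$ steering the skeleton from $x$ to $y$ with $\|h_{x,y}\|_{H^1}^2$ comparable to $|x-y|^2/t$, justified by the claim that the H\"{o}rmander condition makes the control distance comparable to the Euclidean distance. That comparability fails precisely in the hypoelliptic (non-elliptic) situation the theorem addresses: for bounded vector fields one only gets $d_c(x,y)\geq c\,|x-y|$, while in directions reached through brackets of order $r$ the Carnot--Carath\'eodory distance behaves like $|x-y|^{1/r}$, which is strictly larger for nearby points; hence a shift of cost $O(|x-y|^2/t)$ reaching $y$ need not exist, and optimizing over admissible shifts can at best yield an exponent involving $d_c(x,y)^2/t$. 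Moreover, the Girsanov-shift optimization is the standard device for \emph{lower} bounds; the upper bound \eqref{KEest} is obtained in \cite{S} by a different and simpler route: localize the integration-by-parts representation of $D^j_tD^\alpha_xD^\beta_y p$ by inserting a cutoff (or the indicator of the event $\{|X_t-x|\geq |x-y|/2\}$), apply H\"{o}lder's inequality, and bound the resulting probability by an exponential-martingale, sub-Gaussian tail estimate that uses only the boundedness of the derivatives and the linear growth of the coefficients --- no controllability and no metric comparison enter, and the linear growth is exactly what produces the polynomial factor $(1+|x|)^{m_{j,\alpha,\beta}}$. Replacing your Girsanov step by this localization-plus-tail-estimate argument, while keeping your Norris-lemma bookkeeping for the $t$-exponents, turns the sketch into the standard proof.
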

Now consider the recursion equations (\ref{1ga}) and (\ref{1gaa}) for $c_0$ and $c_{k+1}$ for $k\geq 0$ of the expansion (\ref{b}) of the last section.

Let $f:{\mathbb R}^d\rightarrow {\mathbb R}$ be a continuous function of atmost exponential growth, i.e. 
 $$\mbox{for all $x \in \mathbb{R}^{n}$}
		 ~|f(x)|\leq C\exp(C|x|)~~\mbox{ for some constant} C>0.$$
Assume that $a_{ij}, b_i$ are bounded coefficients with bounded derivatives with
 \begin{equation}
 |D^{\alpha}_xa_{ij}|\leq C^{|\alpha|},~|D^{\beta}_xa_{ij}|\leq C^{|\beta|}
 \end{equation}
for some $C$ (e.g. $a_{ij}$ and $b_i$ have representations in form of finite Fourier series). 
Then under these conditions using the \cite{KSest} one may prove that
\begin{equation}\label{weakcon}
 \lim_{\epsilon\downarrow 0}\int_{{\mathbb R}^d}
 f(y)p_{D^{\epsilon}_{\mbox{{\tiny Deg}}}}(t,x,y)dy=
 \int_{{\mathbb R}^d}f(y)p(t,x,y)dy,
\end{equation}
where $p$ denotes the density of the process $X$ according to theorem (\ref{stroock}) above.
Then we may approximate the density via expansion on $D^{\epsilon}_{\mbox{{\tiny Deg}}}$ where it is strictly elliptic and set up an higher order scheme. Details of analysis will be given in a subsequent paper. 
Note that this observation can be used to construct weak higher order estimates for diffusion market models which satisfy the H\"{o}rmander condition. Recall that an approximation scheme $Y$ converges weakly with order $\gamma >0$ to $X$ as $\Delta t\downarrow 0$ and with respect to a function class $C$, if for all $g\in C$
\begin{equation}\label{weakcon}
|E\left(g(X_T)\right)-E\left(g(Y_T)\right) |\leq C\Delta T^{\gamma}
\end{equation}
as $\Delta T\downarrow 0$. It depends on the regularity of functions in $C$ whether (\ref{weakcon}) is a strong condition. In finance we may low regularity of payoffs, e.g. in the case of digital payoffs. This leads to sophisticated weighted Monte-Carlo schemes of bounded variance. The related estimators in \cite{KKS} may be adapted to the more general situation in a quite straightforward way (well, the proof of bounded variance and the error estimates become a little more intricate).

\subsection{Applications to semi-linear equations which are hypoelliptic on some linear subspace}
Note that for $n>d$ the Cauchy problem (\ref{projectiveHoermanderSystem}) does not satisfy the H\"{o}rmander condition on the whole space ${\mathbb R}^n$ in general but only on a linear subspace. Especially, a density may not exist in a regular sense. As an example, consider the following Cauchy problem on $[0,T]\times{\mathbb R}^2$:
\begin{equation}
	\label{eq:simpleExampleReducedSystem}
	\left\lbrace \begin{array}{ll}
		\frac{\partial u}{\partial t}=\frac{1}{2}\sum_{i,j=1}^d(\sigma\sigma^T)_{ij}(x)\frac{\partial^2 u}{\partial x_i\partial x_j}+\sum_{i=d+1}^n\mu_i\frac{\partial u}{\partial x_i},\\
		\\
		u(0,x)=f(x_1,\cdots ,x_d)+g(x_{d+1},\cdots ,x_{n}).
	\end{array}\right.
\end{equation}
Assume that the H\"{o}rmander condition is satisfied on a subspace of dimension $d$ (for any $(x_{d+1},\cdots ,x_n)$ fixed). Hence the fundamental solution $p_d$ of
\begin{equation}
 \frac{\partial u}{\partial t}=\frac{1}{2}\sum_{i,j=1}^d(\sigma\sigma^T)_{ij}(x)\frac{\partial^2 u}{\partial x_i\partial x_j}+\sum_{i=d+1}^n\mu_i\frac{\partial u}{\partial x_i}
\end{equation}
exists (for any $(x_{d+1},\cdots ,x_n)$ fixed), and the solution of (\ref{eq:simpleExampleReducedSystem}) becomes
The solution of this equation is
\begin{equation}
\int_{{\mathbb R}^d}f(y_1,\cdots ,y_d)p_d(t,x,y)dy+g(x_{d+1}+\mu_{d+1} t,\cdots ,x_{n}+\mu_{n} t).
\end{equation}
This leads us to a `distributional density' of the form
\begin{equation}
p(t,x,y):=p_d(t,x,y)\delta(x_{d+1}+\mu_{d+1} t-y_{d+1},\cdots ,x_{n}+\mu_{n} t-y_n).
\end{equation}
We see from this example that a density exists only in a distributional sense.
However, if the initial data $f$ satisfy an exponential growth condition, and are smooth on the space ${\mathbb R}^n\setminus {\mathbb R}^d$, and locally $L^p$ on the subspace ${\mathbb R}^d$ where the H\"{o}rmander condition holds, then  the Cauchy problem (\ref{projectiveHoermanderSystem})
has a regular solution, i.e. a solution in $C^{\infty}\left( \left(0,T \right]\times{\mathbb R}^n\right)$.
The simple example shows that in general there exists no regular density in a situation of degenerate diffusion models with $n>d$. Especially, the regularity theory of densities in the context of Malliavin calculus does not apply directly (as we remarked in \cite{FKSE}). However, the regularity theory for densities of Malliavin calculus may still be useful, especially for the analysis on the subspace of dimension $d$ where the H\"{o}rmander condition holds.  Let us consider (\ref{projectiveHoermanderSystem}) from a probabilistic perspective. The associated Stratonovic integral of a process starting at $x\in {\mathbb R}^n$ is
\begin{equation}\label{strat}
X_t=x+\int_0^tA_0(X_s)ds +\sum_{k=1}^m \int_0^tA_k(X_s)\circ dW_k(s)
\end{equation}
where $W_i$ denote Brownian motions and $\circ$ indicates that the integral is in the Stratonovic sense. If $n=d$ and the H\"{o}rmander condition holds, then the associated covariance matrix process $\sigma_t$ is almost surely invertible, i.e. 
\begin{equation}
\sigma^{-1}_t\in L^p
\end{equation}
for every real number $p$ and $t$ in some arbitrary finite time horizon $[0,T]$. Clearly this is not true in the example above. Note, however, that the associated process exists, i.e. global existence for ordinary stochastic differential equations as is well-known in the context of elementary stochastic analysis. A standard theorem of ordinary stochastic differential equations (for statement and proof cf. \cite{O}) shows the existence of Levy-continuous solutions.  Next we state and an extension of a theorem in \cite{FKSE} (the proof will be given in the third version of \cite{FKSE} ). One can use estimates obtained in \cite{S} which we cited above. In order to see how analytic expansions can be used in this context, our main interest here are some of the constructive aspects of the scheme which leads to the global existence and regularity proof. Let's consider the theorem first. Our interst here is the use of analytic expansions in this context. A detailed proof will be given in version 3 of \cite{FKSE} in arXiv shortly. Let us consider the time-homogeneous case.
Consider a matrix function $x\rightarrow (v_{ji})^{n,m}(x),~1\leq j\leq n,~0\leq i\leq m$ on ${\mathbb R}^n$, and $m$ smooth vector fields 
\begin{equation}
V_i=\sum_{j=1}^n v_{ji}(x)\frac{\partial}{\partial x_j},
\end{equation}
where $0\leq i\leq m$.
Consider the 
Cauchy problem on $[0,T]\times {\mathbb R}^n$ (where $T>0$ is an arbitrary finite horizon)
\begin{equation}
	\label{projectiveHoermanderSystemgen}
	\left\lbrace \begin{array}{ll}
		\frac{\partial u}{\partial t}=\frac{1}{2}\sum_{i=1}^mV_i^2u+V_0u\\
		\\
		u(0,x)=f(x).
	\end{array}\right.
\end{equation}
This may be rewritten in the form
\begin{equation}
	\label{projectiveHoermanderSystemgen2}
	\left\lbrace \begin{array}{ll}
		\frac{\partial u}{\partial t}=\frac{1}{2}\sum_{i,j=1}^nv_{ij}^*(x)\frac{\partial^2}{\partial x_i\partial x_j}u+\sum_{j=1}^n v_{j0}(x)\frac{\partial u}{\partial x_j}\\
		\\
		u(0,x)=f(x),
	\end{array}\right.
\end{equation}
where
\begin{equation}
\left( v^*_{ij}\right) (x)=\sum_{k=1}^m\left(V_i\right)^{\otimes^2}.
\end{equation}
A general reduced Cauchy problem is defined by the condition that for all $t\in [0,T]$ and $x\in {\mathbb R}^n$
\begin{equation}
\left( v^*_{ij}\right) (x)
\end{equation}
has rank $d\equiv d(x)\leq n$, where for each $x\in {\mathbb R}^n$ the number $d(x)$ is determined by the  H\"{o}rmander condition at $x\in {\mathbb R}^n$, i.e.  
\begin{equation}\label{Hoergen}
\left\lbrace V_i(x), \left[V_j,V_k \right](t,x), \left[ \left[V_j,V_k \right], V_l\right](t,x),\cdots |1\leq i\leq m,~0\leq j,k,l\cdots \leq m \right\rbrace 
\end{equation}
spans a linear subspace $W_{x}$ of dimension $d(x)$.  In this case we may consider the intersection of all $x$-dependent subspaces which are spanned by the local H\"{o}rmander condition at $x$, i.e.
\begin{equation}
I_{H}:=\cap_{x\in  {\mathbb R}^n}W_{x}
\end{equation}
where $W_{x}$ is the vector subspace of dimension $d(x)$ spanned by (\ref{Hoergen}) above at $x$. Here the notation $I_{H}$ indicates that we consider a intersection of spaces defined by local H\"{o}rmander conditions. Our most general theorem will show that regular global solutions of (\ref{projectiveHoermanderSystemgen}) (rsp. (\ref{projectiveHoermanderSystemgen2})) exist if the data are rough (i.e. in $L^p\left({\mathbb R}^n\right)$ only in $I_H$ and are smooth on the complementary vector subspace ${\mathbb R}^n\setminus I_H$. In the following section we look at the situation from the perspective of elementary stochastic analysis. We observe that from this perspective regularity is closely linked to regularity of the data. We shall see then that a constructive analytic scheme together with Malliavin type estimates lead us to stronger results. 
Next consider a matrix-valued function $x\rightarrow (v_{ji})^{n,m}(x),~1\leq j\leq n,~0\leq i\leq m$ on ${\mathbb R}^n$, and $m$ smooth vector fields 
\begin{equation}
V_i=\sum_{j=1}^n v_{ji}(x)\frac{\partial}{\partial x_j},
\end{equation}
where $0\leq i\leq m$.

We have
\begin{thm}
	\label{mainthm}
	Let $1\leq p\leq \infty$. Consider the Cauchy problem (\ref{projectiveHoermanderSystemgen}) on $[0,T]\times {\mathbb R}^n$. Assume that the initial data function $f:{\mathbb R}^n\rightarrow {\mathbb R}$ satisfies
	\begin{equation}\label{payoff}
	\begin{array}{ll}
		(i) & ~\mbox{ the function } f\mbox{ is $L^p_{loc},1\leq p\leq \infty$ on }~I_H,\\
		\\
		(ii) & ~\mbox{ the function } f\mbox{ is $C^{\infty}$ on } {\mathbb R}^n\setminus I_H ,
		\\
		\\
		(iii) & ~\mbox{for all $x \in \mathbb{R}^{n}$}\\
		& ~|f(x)|\leq C\exp(C|x|)~~\mbox{ for some constant $C>0$}.
	\end{array}
\end{equation}
Assume that the coefficients are smooth (i.e. $C^{\infty}$) of linear growth with bounded derivatives, i.e.
\begin{equation}
	\label{smooth}
	v_{ji} \in C_{l,b}^{\infty}\left({\mathbb R}^n \right)
\end{equation}
for $i=0$ and $1\leq j\leq n$, or  $1\leq i\leq m$ and $1\leq j\leq n$.
Then the Cauchy problem (\ref{projectiveHoermanderSystemgen}) has a global classical solution $u$, where
\begin{equation}
u\in C^{\infty}\left(\left(0,T\right] \times {\mathbb R}^n \right), 
\end{equation}
where the singular behaviour is $t=0$ is determined by the Malliavin-type estimate in \cite{S} as follows:
for given natural numbers $m$ and $N$ there is a number $q$ such that the solution $u$ and its time derivatives up to order $m$ and its spatial derivatives up to order $N$ are located in the space. 
\begin{equation}
C^q_{m, N}\left([0,T]\times {\mathbb R}^n \right) :=\left\lbrace v| t^qv\in C_{m, N}\left([0,T]\times {\mathbb R}^n \right)\right\rbrace , 
\end{equation}
where
\begin{equation}
C_{m,N}\left([0,T]\times {\mathbb R}^n \right):=\left\lbrace f \ \vert \ \|f\|+\sum_{l\leq m}\|D^l_tf\|+\sum_{|\alpha|\leq N}\|D^{\alpha}_xf\|<\infty\right\rbrace ,
\end{equation}
and $\|.\|$ denoting the supremum norm. Moreover, $q=\max_{|\alpha|\leq N}n_{m,\alpha,{\bf 0}}-n/2$ where $n_{m,\alpha,{\bf 0}}$ is determined by the estimate in \cite{S} of the singular behavior of the density.

\end{thm}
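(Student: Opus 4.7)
The plan is to combine the probabilistic representation $u(t,x)=\mathbb{E}^x[f(X_t)]$ for the SDE associated with (\ref{projectiveHoermanderSystemgen}) with a decomposition of the (distributional) transition kernel along the H\"ormander subspace $I_H$ and its complement. Global existence of $X_t$ on $[0,T]$ follows from standard results for stochastic ODEs with smooth coefficients of linear growth and bounded derivatives, as recalled from \cite{O}, and $u$ so defined will be shown to satisfy (\ref{projectiveHoermanderSystemgen}) in the classical sense on $(0,T]\times{\mathbb R}^n$ via the It\^o--Feynman--Kac formula once interior smoothness is established.

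First I would decompose the kernel in the spirit of the toy example (\ref{eq:simpleExampleReducedSystem}). On $I_H$, where the H\"ormander condition applies, Theorem \ref{stroock} yields a smooth Malliavin-regular density $p_d(t,x,y)$ satisfying the estimate (\ref{KEest}); in the complementary directions ${\mathbb R}^n\setminus I_H$ the dynamics are purely drift-driven and the kernel acts as a shift (delta-distribution) of the arguments of $f$. The solution then reads schematically as an integral of $f$ against $p_d$ in the $I_H$-directions, composed with the pushforward by the deterministic drift flow in the complementary directions.

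Second, the smoothness claim on $(0,T]\times{\mathbb R}^n$ follows by differentiating under the integral sign. In the complementary directions, hypothesis (ii) transfers smoothness of $f$ on ${\mathbb R}^n\setminus I_H$ directly to derivatives of $u$. In the $I_H$-directions the $L^p_{loc}$-roughness of $f$ is absorbed by the smoothing of $p_d$: applying $D^j_tD^\alpha_xD^\beta_y$ to the kernel produces a bound of the form $t^{-n_{j,\alpha,\beta}}$ times a Gaussian in $(x-y)/\sqrt{t}$, and integration against $f$ of exponential growth converges since completing the square in the Gaussian dominates $\exp(C|y|)$ for any finite $t>0$ by hypothesis (iii). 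Tracking the powers of $t$ coming out of (\ref{KEest}), minus the $t^{n/2}$ gained from the Gaussian normalization, yields precisely the exponent $q=\max_{|\alpha|\leq N}n_{m,\alpha,\mathbf{0}}-n/2$ asserted in the theorem.

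The main obstacle is to make the distributional decomposition rigorous when $I_H$ is defined as an intersection of local subspaces $W_x$ whose dimension $d(x)$ may vary with $x$. The cleanest route is to work with the analytic approximations $p_\epsilon$ constructed from the expansions of the previous sections on the strictly elliptic strata $D^\epsilon_{\mbox{{\tiny Deg}}}$, and to pass to the limit $\epsilon\downarrow 0$ using the weak-convergence statement (\ref{weakcon}). The delicate point here is a uniform-in-$\epsilon$ version of the Stroock bound (\ref{KEest}), so that the limit inherits the singular exponent $q$ stated in the theorem; establishing this uniformity is where the bulk of the technical work lies, and is announced to be the content of the forthcoming version~3 of \cite{FKSE}.
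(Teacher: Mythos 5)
Your route (the Feynman--Kac representation $u(t,x)=\mathbb{E}^x[f(X_t)]$ plus a factorized kernel: a Malliavin-smooth density in the $I_H$-directions composed with a deterministic drift shift in the complementary directions) is genuinely different from what the paper does, and it has a gap at its central step. The factorization of the transition kernel into $p_d(t,x,y)$ times a delta-type shift is exact only in the uncoupled situation of the toy example \eqref{eq:simpleExampleReducedSystem}, where the complementary drift is constant and the diffusive block does not feed into it. Under the hypotheses of Theorem \ref{mainthm} the blocks are coupled: the coefficients $v^*_{ij}$ and the drift components in the complementary directions may depend on \emph{all} coordinates, so the motion in the ${\mathbb R}^n\setminus I_H$ directions is driven by the random diffusive coordinates and is not the push-forward of $f$ under a fixed deterministic flow; conditionally on the diffusive path one gets a random, path-dependent shift, and no closed-form product kernel exists. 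Consequently the step ``differentiate under the integral sign and read off $q=\max_{|\alpha|\leq N}n_{m,\alpha,{\bf 0}}-n/2$ from \eqref{KEest}'' does not go through as stated. Moreover, invoking Theorem \ref{stroock} ``on $I_H$'' for each frozen complementary coordinate needs at least uniformity of the estimate \eqref{KEest} in the frozen variables, which you do not address; and your fallback for variable $d(x)$ --- elliptic regularization $p_\epsilon$ together with \eqref{weakcon} and a uniform-in-$\epsilon$ version of \eqref{KEest} --- is exactly the part you concede is open, so the proposal does not close the proof.

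For comparison: the paper itself also does not give a self-contained proof (it explicitly defers the details to version~3 of \cite{FKSE}); what it presents is a constructive scheme built precisely to handle the coupling your factorization ignores. It reduces to a block structure, represents the first-order (vector-field) subproblem by a global flow with a Duhamel term (Proposition \ref{prop}), and then runs the alternating iteration \eqref{degdiff3}--\eqref{degdiff2}, freezing the complementary variables in the diffusion substep and the diffusive variables in the vector-field substep; local-in-time convergence is argued after the time dilatation $t=\rho\tau$ for small $\rho$ using the a priori estimates of \cite{S}, and the global statement follows by iterating in time via the semigroup property, with global existence of the underlying process supplied by \cite{O}. If you want to keep a probabilistic argument, you would in effect have to reproduce this splitting at the level of the kernel (conditioning on the diffusive path, or proving convergence of the corresponding splitting of the stochastic flow), with \eqref{KEest} applied substep by substep --- which is in substance the paper's scheme rather than a one-shot kernel formula.
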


Since we are interested in the application of analytic expansions let us consider the main steps of the proof which lead to the constructive scheme (cf. \cite{FKSE} for details). For most applications in finance the situation in (\ref{mainthm}) can be reduced to the block structure, where the whole space ${\mathbb R}^n$ can be decomposed into a part ${\mathbb R}^d$ where the H\"{o}rmander condition holds and a complementary part ${\mathbb R}^{n-d}$ where the operator looks like a vector field. For associated first order equations with source term one observes (for a proof cf. \cite{FKSE}) : 
\begin{prop}\label{prop}
Fix $x^d\in {\mathbb R}^d$. Assume that the conditions of theorem 1 are satisfied. Assume that $g\in C^1\left([0,T]\times {\mathbb R}^n \right) $. Then there exists a smooth global flow ${\cal F}^t$ generated by the vector field below on $[0,T]\times {\mathbb R}^{n-d}$ such that the first order equation problem
\begin{equation}\label{vecf}
\begin{array}{ll}
\frac{\partial u}{\partial t}=\sum_{i=d+1}^n\mu_i(x^d,x^{n-d})\frac{\partial}{\partial x_i}u+g(t,x^d,x^{n-d}),\\
\\
~~u(0,x^d,x^{n-d})=f(x^k,x^{n-d}),
\end{array}
\end{equation}
has the solution
\begin{equation}\label{sol}
u(t,x^d,x^{n-d})=f\left(x^d,{\cal F}^t x^{n-d}\right)+\int_0^tg(s,x^d,{\cal F}^{t-s}x^{n-d})ds. 
\end{equation}

\end{prop}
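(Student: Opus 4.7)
The plan is to reduce the problem to a parameter-dependent ODE on $\mathbb{R}^{n-d}$ and verify the Duhamel formula by direct differentiation. Throughout the argument $x^d$ is frozen, so it plays only the role of a parameter and all smoothness/growth conditions on the coefficients $\mu_i$ are inherited from the assumption \eqref{smooth} of Theorem \ref{mainthm}.

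First I would construct the flow. Define the parameter-dependent vector field $V(x^d,\cdot)=\sum_{i=d+1}^n \mu_i(x^d,\cdot)\partial_{x_i}$ on $\mathbb{R}^{n-d}$. Since the $\mu_i$ are smooth with bounded derivatives (linear growth in the space variables), the standard Picard--Lindel\"of theory together with the usual Gr\"onwall bound rules out finite-time blow-up, so the ODE
\begin{equation*}
\frac{d}{dt}\Phi_t(z)=V(x^d,\Phi_t(z)),\qquad \Phi_0(z)=z,
\end{equation*}
has a unique solution defined for all $t\in[0,T]$ and all $z\in\mathbb{R}^{n-d}$. Smooth dependence on initial data (and on the parameter $x^d$) gives ${\cal F}^t:=\Phi_t\in C^\infty$, and the one-parameter group property ${\cal F}^{t+s}={\cal F}^t\circ{\cal F}^s$ follows from uniqueness.

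Next I would verify the formula by differentiation. Write $u=u_1+u_2$ with
\begin{equation*}
u_1(t,x^d,x^{n-d}):=f(x^d,{\cal F}^t x^{n-d}),\qquad u_2(t,x^d,x^{n-d}):=\int_0^t g(s,x^d,{\cal F}^{t-s}x^{n-d})\,ds.
\end{equation*}
By the chain rule and the defining ODE for ${\cal F}^t$,
\begin{equation*}
\frac{\partial u_1}{\partial t}=\sum_{i=d+1}^n \mu_i(x^d,{\cal F}^t x^{n-d})\,\frac{\partial f}{\partial x_i}\bigl(x^d,{\cal F}^t x^{n-d}\bigr)=\sum_{i=d+1}^n \mu_i(x^d,x^{n-d})\,\frac{\partial u_1}{\partial x_i},
\end{equation*}
where the last identity follows by recognising $\partial u_1/\partial x_i$ as the transported derivative along the flow and using that the vector field is $x^{n-d}$-evaluated at the current point. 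For $u_2$, Leibniz's rule for differentiation under the integral sign yields the boundary term $g(t,x^d,x^{n-d})$ together with $\sum_i \mu_i(x^d,x^{n-d})\,\partial u_2/\partial x_i$, by the same chain-rule computation as for $u_1$. Adding the two contributions recovers \eqref{vecf}. The initial condition is immediate since ${\cal F}^0=\mathrm{id}$ and the integral vanishes at $t=0$.

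The main technical nuisance is the chain-rule identity that turns differentiation along the flow into the spatial derivative operator $\sum_i\mu_i\partial_{x_i}$ evaluated at the base point $x^{n-d}$: one has to use the identity $\partial_{x_j}({\cal F}^t x^{n-d})=D{\cal F}^t(x^{n-d})\cdot e_j$ together with the variational equation for $D{\cal F}^t$ and rearrange. This is where the smoothness of $V$ (and hence of ${\cal F}^t$) in the spatial argument is used in an essential way. Once this identity is in hand, regularity $u\in C^1$ follows from $f,g\in C^1$ and the smoothness of the flow, which is enough to interpret \eqref{vecf} classically.
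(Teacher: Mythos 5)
Your proposal is correct. Note that the paper itself does not prove Proposition \ref{prop}: it defers the proof to \cite{FKSE}, so there is no internal argument to compare against; your verification is the standard characteristics/Duhamel argument that fills this gap. The two ingredients you use are exactly the right ones: global existence and smoothness of the flow of the autonomous field $\sum_{i=d+1}^n\mu_i(x^d,\cdot)\partial_{x_i}$ (guaranteed by the $C^\infty_{l,b}$ hypothesis \eqref{smooth}, linear growth plus Gr\"onwall), and the push-forward identity $D{\cal F}^t(z)\,V(z)=V\left({\cal F}^t z\right)$, which you correctly single out as the only delicate step; it follows either from the variational equation together with uniqueness (both sides solve $\dot w=DV({\cal F}^t z)\,w$ with initial value $V(z)$) or from the commutation ${\cal F}^t\circ{\cal F}^s={\cal F}^s\circ{\cal F}^t$. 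With that identity, your differentiation of $u_1$ and the Leibniz computation for $u_2$ (producing the boundary term $g(t,x^d,x^{n-d})$) verify \eqref{vecf} classically under $f,g\in C^1$, and the sign convention is consistent: since the equation reads $\partial_t u - Vu = g$, the forward flow appears in \eqref{sol}, as you have it.
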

The notation above which indicates that some coordinates are fixed ($x^d$ or $x^{n-d}$) is a little cumbersome and we shall drop it sometimes writing just $x$ instead of $(x^d,x^{n-d})$ in the following when it is quite clear from the context which components of $x$ should be considered to be fixed.
Next we define a local iteration scheme involving global flows of the type discussed in Proposition \ref{prop} above  and solutions of parabolic equations of form
\begin{equation}\label{degdiff}
\frac{\partial u}{\partial t}=\sum_{i,j=1}^d a^*_{ij}(x)\frac{\partial^2 u}{\partial x_i\partial x_j}+\sum_{i=1}^n\mu_i(x)\frac{\partial u}{\partial x_i}
\end{equation}
with $x^{n-d}$ fixed. 
The natural ansatz is an AD-scheme of the following form: we define\newline
\textbf{Vector Field Step:} ($l\geq 0$)
\begin{equation}\label{degdiff3}
\begin{array}{ll}
&\frac{\partial u^{2l}}{\partial t}-\sum_{i=d+1}^n\mu_i(x)\frac{\partial u^{2l}}{\partial x_i}\\
\\
=&
\left\lbrace \begin{array}{ll}
\sum_{i,j=1}^d a^*_{ij}(x)\frac{\partial^2 u^{2l-1}}{\partial x_i\partial x_j}
+\sum_{i=1}^d\mu_i(x)\frac{\partial u^{2l-1}}{\partial x_i}~\mbox{if}~l\neq 0\\
\\
0~\mbox{if}~l=0.
\end{array}\right.

\end{array}
\end{equation}
and\newline
\textbf{Diffusion Step:} ($l\geq 1$)
\begin{equation}
	\label{degdiff2}
	\begin{array}{ll}
&\frac{\partial u^{2l-1}}{\partial t}-\sum_{i,j=1}^d a^*_{ij}(x)\frac{\partial^2 u^{2l-1}}{\partial x_i\partial x_j}-\sum_{i=1}^d\mu_i(x)\frac{\partial u^{2l-1}}{\partial x_i}\\
\\
=&\sum_{i=d+1}^n\mu_i(x)\frac{\partial u^{2l-2}}{\partial x_i}.
\end{array}
\end{equation}
For each $m$ we define $u^{m}(0,.)=f(.)$ and $u^{m+1}(0,.)=f(.)$.
Here, in equation (\ref{degdiff2}) we understand $(x_{d+1},\cdots,x_n)$ to be fixed, and in  (\ref{degdiff3}) we understand
$(x_{1},\cdots,x_d)$ to be fixed.
In order to prove convergence time step by time step we rewrite the scheme in time-dilatation coordinates ($\rho$ will be small)
\begin{equation}\label{transtime}
\begin{array}{ll}
t: [0,\infty)\rightarrow [0,\infty),\\
\\
t(\tau)=\rho \tau .
\end{array}
\end{equation}
Then we get an equivalent equation in $\tau$ where the coefficients of the symbol of the operator become small if $\rho$ is small. We have
$$\frac{dt}{d\tau}=\rho ,$$
and
\begin{equation}
	\label{tauredpara}
	\left\lbrace  \begin{array}{ll}
		\frac{\partial u}{\partial \tau}-\rho\sum_{i,j=1}^{d}a^*_{ij}(x)\frac
		{\partial^{2}u}{\partial x^{i}\partial x^{j}}-\rho\sum_{i=1}^{n}
		\mu_i (x)\frac{\partial u}{\partial x^{i}}=0,\\
		\\
	u(0,x)=f(x).
	\end{array}\right.
\end{equation}

An iteration step of the scheme considered in transformed time $\tau$ for some time horizon $[0,T_0]$ is then given by
\begin{equation}
\begin{array}{ll}
&\frac{\partial u^{\rho ,2l}}{\partial \tau}-\sum_{i=d+1}^n\rho\mu_i(x)\frac{\partial u^{\rho ,2l}}{\partial x_i}\\
\\
=&\sum_{i,j=1}^d \rho a^*_{ij}(x)\frac{\partial^2 u^{\rho ,2l-1}}{\partial x_i\partial x_j}
+\sum_{i=1}^d\rho \mu_i(x)\frac{\partial u^{\rho ,2l-1}}{\partial x_i},
\end{array}
\end{equation}
 and 
\begin{equation}
\begin{array}{ll}
&\frac{\partial u^{\rho ,2l-1}}{\partial \tau}-\rho\sum_{i,j=1}^d a^*_{ij}(x)\frac{\partial^2 u^{\rho ,2l-1}}{\partial x_i\partial x_j}-\sum_{i=1}^k\rho\mu_i(x)\frac{\partial u^{\rho ,2l-1}}{\partial x_i}\\
\\
=&\sum_{i=d+1}^n\rho\mu_i(x)\frac{\partial u^{\rho ,2l-2}}{\partial x_i},
\end{array}
\end{equation}
for $l\geq 1$.
We start the scheme with
\begin{equation}
\begin{array}{ll}
\frac{\partial u^{\rho ,0}}{\partial \tau}-\sum_{i=d+1}^n\rho \mu_i(x)\frac{\partial u^{\rho ,0}}{\partial x_i}=0.
\end{array}
\end{equation}
The initial conditions are
\begin{equation}
u^{\rho ,m}(0,x)=f(x),~~m\geq 0,
\end{equation}
where for $m=1,3,\cdots$ $(x_{d+1},\cdots,x_n)$ is fixed, and for $m=0,2,\cdots$
$(x_{1},\cdots,x_d)$ is fixed. The solution can be constructed in the form in the form
\begin{equation}
u^{\rho}(\tau,x)=u^{\rho, 1}(\tau ,x)+\sum_{l\geq 1}\delta u^{\rho ,2l+1}(\tau ,x),
\end{equation}
where for $l\geq 1$
\begin{equation}
\delta u^{\rho ,2l+1}=u^{\rho ,2l+1}-u^{\rho ,2l-1}
\end{equation}
satisfies
\begin{equation}\label{parabdelta}
\begin{array}{ll}
&\frac{\partial \delta u^{\rho ,2l+1}}{\partial \tau}-\rho\sum_{i,j=1}^d a_{ij}(x)\frac{\partial^2 \delta u^{\rho ,2l+1}}{\partial x_i\partial x_j}-\sum_{i=1}^d\rho\mu_i(t(\tau),x)\frac{\partial \delta u^{\rho ,2l+1}}{\partial x_i}\\
\\
=&\sum_{i=d+1}^n\rho\mu_i(t(\tau),x)\frac{\partial \delta u^{\rho ,2l}}{\partial x_i},
\end{array}
\end{equation}
and in each substep where the right side in (\ref{parabdelta})
\begin{equation}
\delta u^{\rho ,2l}=u^{\rho ,2l}-u^{\rho ,2l-2}
\end{equation}
satisfies
\begin{equation}\label{rec1}
\begin{array}{ll}
&\frac{\partial \delta u^{\rho ,2l}}{\partial \tau}-\sum_{i=d+1}^n\rho\mu_i(t,x)\frac{\partial \delta u^{\rho ,2l}}{\partial x_i}\\
\\
=&\sum_{i,j=1}^d \rho a^*_{ij}(x)\frac{\partial^2 \delta u^{\rho ,2l-1}}{\partial x_i\partial x_j}
+\sum_{i=1}^d\rho\mu_i(x)\frac{\partial \delta u^{\rho ,2l-1}}{\partial x_i}.
\end{array}
\end{equation}
Moreover, for $m\geq 1$ $\delta u^{\rho ,m}$ has zero initial conditions, i.e. $\delta u^{\rho ,m}(0,x)=0$.
For small $\rho$ the scheme just described is locally convergent with respect to time. Then iteration of the scheme in time using the semigroup property leads to  a convergent scheme of a global solution to the Cauchy problem 
\begin{equation}
	\label{redparatrans}
	\left\lbrace  \begin{array}{ll}
		\frac{\partial u^{\rho}}{\partial \tau}-\frac{1}{2}\sum_{i,j=1}^{d}\rho a^*_{ij}(x)\frac
		{\partial^{2}u^{\rho}}{\partial x^{i}\partial x^{j}}-\sum_{i=1}^{n}
		\rho\mu_i (x)\frac{\partial u^{\rho}}{\partial x^{i}}=0 \text{,}\\
		\\
		u^{\rho}(0,x)=f(x).
	\end{array}\right.
\end{equation}
Note that iteration in time means that we start the next time step with the initial data $u^{\rho}(T_0,.)$, and after repeating the scheme above we get the next initial data $u^{\rho}(2T_0,.)$ and so on. The choice of $\rho$ depends on certain a priori estimates in\cite{S}. At each time step approximations of the densities of the diffusion substeps can be constructed according to the preceeding section. This leads to efficient schemes for a considerable class of semi-elliptic equations. For problems in higher dimensions probabilistic weighted Monte-Carlo schemes are constructed from the scheme above in a natural way (we shall discuss them in version 3 of \cite{FKSE} shortly). 

\section{Application to American derivatives}

We reconsider the front-fixing method in the case of a multivariate put option.
We extend the considerations of \cite{K} to (micro)-hypoelliptic operators. Our main interest here is how analytic expansions may be used in this context. Further ananlysis as well as details of global existence and regularity proofs can be found in a subsequent paper. We start with the operator
\begin{equation}\label{orig}
\frac{\partial u}{\partial t}+Lu\equiv\frac{\partial u}{\partial t}+\frac{1}{2}\sum_{ij}v_{ij}S_iS_j\frac{\partial^ 2 u}{\partial S_i\partial S_j}+r\left(\sum_i S_i\frac{\partial u}{\partial S_i}-u \right), 
\end{equation}
where $v_{ij}=(\sigma\sigma^T)_{ij}$ and $r$ may depend on time $t$ and spatial variables $S$. We do not assume that $v_{ij}$ the volatility matrix is strictly elliptic, but we assume that the operator is (micro)-hypoelliptic in the continuation region, or that the H\"{o}rmander condition holds in the continuation region. This may be rephrased nicely in the context of the frontfixing method on a half-space.
Let ${\cal E}\subset [0,T]\times {\mathbb R}^n_+$ denote the exercise region and for each $t\in [0,T]$ let ${\cal E}_t$ denote the $t$-section of the exercise region, i.e. ${\cal E}_t:=\left\lbrace x|(t,S)\in {\cal E}\right\rbrace$. 
In general beside basic standard assumptions on diffusion market models introduced in the previous Section we shall assume that 

\begin{itemize}
\item[(GG)]
for each $t\in [0,T]$ we assume that
$0\in {\cal E}_t$ and that ${\cal E}_t$ is star-shaped with respect to $0$, i.e. for all $S\in {\cal E}_t$ and all $\lambda\in [0,1]$ we assume that $\lambda S\in {\cal E}_t$. 
\begin{rem}
Note that this means that for a fixed "angle" at $S=(S_1,\cdots ,S_n)$, i.e. at
$$
\phi_S:=\left( \frac{S_2}{\sum_{i=1}^n S_i},\cdots ,\frac{S_n}{\sum_{i=1}^n S_i}\right) 
$$
we have one intersection point of the free boundary of the section ${\cal E}_t$ and  the ray through $0$ which is determined by the angle $\phi_S$.
\end{rem}
\end{itemize}
The condition (GG) is called the global graph condition. The condition (GG) holds if $x\rightarrow u(t,x)$ is convex, where $(t,x)\rightarrow u(t,x)$ denotes the value function of an American Put. This is a sufficient (not necessary) condition for (GG) to hold. Especially, this condition is satisfied for the multivariate Black-Scholes model (Consider the Snell envelope definition in order to verify convexity). However, note that
from the Snell envelope representation of the price of an American index Put 
 \begin{equation}
 p_A(t,S;K)=\sup_{\tau \in \mbox{Stop}_{[0,T]}}E_Q\left(K-\sum_{j=1}^nS_j\right) 
 \end{equation}
we see that the price function $p_A$ of an American put option with strike $K$ and with maturity $T>0$
\begin{equation}
(t,S;K)\rightarrow p_A(t,S;K)
\end{equation}
is homogenous of degree $1$ with respect to $(S,K)$, i.e. for any $\lambda >0$ we have for all $t\in [0,T]$
\begin{equation}
p_A(t,\lambda S;\lambda K)=\lambda p_A(t, S; K).
\end{equation}
Now we can prove
\begin{prop}
Assume the American put is written in a market where the no-arbitrage condition holds.
The price function $p_A$ of an American put option with strike $K$ and with maturity $T>0$
has the property that the function
\begin{equation}
S\rightarrow p_A(t,S;K)
\end{equation}
is convex for all $t\in [0,T]$ and $K$ fixed. Hence, (GG) is satisfied in this case independent of the underlying model.
\end{prop}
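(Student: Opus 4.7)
The plan is to establish convexity through the Snell envelope representation, then deduce (GG) from convexity together with the boundary value at $S=0$. The key observations are that the payoff $\phi(S) = \left(K - \sum_j S_j\right)_+$ is convex in $S$ (being the positive part of an affine function of $S$), and that the risk-neutral dynamics of the multi-asset model (\ref{marketincomp}) are multiplicative in each asset, so that the map $S \mapsto S^{t,S}_\tau$ preserves convex combinations.

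First I would write the Snell envelope as
\begin{equation*}
p_A(t,S;K) = \sup_{\tau \in \mbox{Stop}_{[t,T]}} E_Q\left[e^{-\int_t^{\tau} r(s,S_s)\,ds}\left(K - \sum_j S^j_\tau\right)_+ \Bigm| S_t = S\right].
\end{equation*}
For each fixed stopping time $\tau$, the key step is to show that $S \mapsto E_Q[\ldots]$ is convex. In the multiplicative setting (\ref{marketincomp}) one has $S^j_\tau = S^j_t\, M^j_\tau$, where $M^j_\tau > 0$ is the associated Doleans--Dade exponential and does not depend on the initial value $S^j_t$. Consequently the integrand becomes the positive part of an affine function of $S_t = (S^1_t,\ldots,S^n_t)$, hence pointwise convex in $S_t$; since $E_Q$ is a positive linear functional it preserves convexity, and the pointwise supremum of the resulting convex family over $\tau$ is again convex. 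This proves convexity of $S \mapsto p_A(t,S;K)$.

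To derive (GG) I would then use the boundary value $p_A(t,0;K) = K$, which follows from immediate exercise at $S=0$ (assuming $r\geq 0$, as is standard). Convexity along the segment $[0,S]$ gives
\begin{equation*}
p_A(t,\lambda S;K) \leq \lambda\, p_A(t,S;K) + (1-\lambda) K \quad \mbox{for}\ \lambda \in [0,1].
\end{equation*}
If $S \in {\cal E}_t$ with $K > \sum_j S_j$, then $p_A(t,S;K) = K - \sum_j S_j$, and the right-hand side simplifies to $K - \lambda \sum_j S_j = (K - \sum_j \lambda S_j)_+$. Combined with the obvious lower bound $p_A(t,\lambda S;K) \geq (K - \sum_j \lambda S_j)_+$ (the value always dominates immediate exercise), equality is forced, so $\lambda S \in {\cal E}_t$, which is precisely the star-shaped condition (GG).

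The main obstacle is the pathwise step inside the expectation: to pass convexity of the payoff through the stochastic flow one needs $S \mapsto S^{t,S}_\tau$ to be compatible with convex combinations. For the multiplicative diffusion models considered here this is immediate from the linearity of log-dynamics in the initial condition, but the assertion that the result holds ``independent of the underlying model'' should be interpreted within this class; for genuinely state-dependent volatility one would have to supplement the argument with a PDE-based convexity-preservation result for the associated variational obstacle problem with the convex obstacle $\phi$, a point I would defer to the subsequent paper referred to in the text.
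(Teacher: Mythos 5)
Your argument is correct in substance, but it takes a genuinely different route from the paper. The paper never touches the stochastic flow: it first derives convexity of $K\mapsto p_A(t,S;K)$ from a static no-arbitrage dominance argument (a portfolio of $\mu$ puts struck at $K_1$ and $(1-\mu)$ puts struck at $K_2$ superreplicates one put struck at $\mu K_1+(1-\mu)K_2$), then invokes homogeneity of degree one of the index-put price in $(S,K)$, read off from the Snell envelope, and converts convexity in the strike at fixed $S$ into convexity in $S$ at fixed $K$ by the substitution $S=\mu S_1+(1-\mu)S_2$ with $S_i=\lambda_i K$. That argument needs no linearity of the flow in the initial datum, only scale invariance of the dynamics, which is why the paper can claim (modulo the homogeneity hypothesis, which it discusses separately) that the conclusion is model-independent; in particular it covers stochastic-volatility models where $\sigma$ depends on $Y$ or on relative weights of $S$ but not on the overall level. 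Your pathwise route through the Snell envelope requires more structure: you need $S^j_\tau=S^j_t M^j_\tau$ with $M^j_\tau$ independent of the initial level, and — a point your write-up glosses over — you also need the discount factor $\exp(-\int_t^\tau r(s,S_s)\,ds)$ not to depend on the initial stock level (e.g.\ $r$ deterministic or a function of $t$ and $Y$ only); otherwise the integrand is no longer the positive part of an affine function of $S_t$ and the pointwise convexity step fails. Within that (Black--Scholes-type) class your proof is fine, and you correctly flag that state-dependent volatility would require a separate convexity-preservation result for the obstacle problem. What your approach buys in return is a clean, self-contained probabilistic proof plus an explicit derivation of (GG) from convexity via the segment $[0,S]$ and the boundary value $p_A(t,0;K)=K$ — an implication the paper only asserts as a sufficient condition and does not prove; conversely, the paper's arbitrage-plus-homogeneity argument buys the broader model class and is the reason it can phrase the proposition as independent of the underlying model.
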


\begin{proof}
Comparing a portfolio $\Pi_A(t)$ consisting of  $\mu$ American puts with strike $K_1$ and $(1-\mu)$ American Puts with strike $K_2$ with a portfolio $P_B$ consisting of one American Put with strike $K=\mu K_1+(1-\mu) K_2$ we get
\begin{equation}
p_A(t,S;K)\leq \mu p_A(t,S;K_1)+(1-\mu)p_A(t,S;K_2).
\end{equation}
Then for fixed $K$ we may write convexity of $p_A$ in $S=(\mu\lambda_1+(1-\mu \lambda_2) K=\mu S_1+(1-\mu) S_2$ in the form
\begin{equation}\label{conv}
\begin{array}{ll}
(\mu\lambda_1+(1-\mu \lambda_2)p_A\left( t,K;\frac{K}{(\mu\lambda_1+(1-\mu \lambda_2)}\right) \\
\\\leq \mu \lambda_1 p_A\left( t,K;\frac{K}{\lambda_1}\right)+(1-\mu)\lambda_2p_A\left( t,K;\frac{K}{\lambda_2}\right)
\end{array}
\end{equation}
Division of (\ref{conv}) by $(\mu\lambda_1+(1-\mu \lambda_2)$ and the observation
that $p_A$ on the left side of (\ref{conv}) may be rewritten in the form
\begin{equation}
p_A\left( t,K;\frac{K}{(\mu\lambda_1+(1-\mu \lambda_2)}\right)=
p_A\left( t,K;\rho\frac{K}{\lambda_1}+(1-\rho)\frac{K}{\lambda_2}\right)
\end{equation}
along with
\begin{equation}
\rho=\frac{\mu \lambda_1}{(\mu\lambda_1+(1-\mu \lambda_2)} 
\end{equation}
leads to the reduction of convexity with respect to the asset to convexity with respect to the strike.
\end{proof}
Furthermore, note homogeneity of order one in $(S,K)$ is a quite natural condition. If a selffinancing portfolio with initial value $\Pi_{t_0}$ reduplicates the payoff $(K-S_{\tau^*})$, then the portfolio $\lambda \Pi_t$ reduplicates the payoff $(K-S_T)$

Hence, the free boundary can be written in terms of the angles in form
\begin{equation}
(t,\phi_S)\rightarrow F(t,\phi_S). 
\end{equation}
We consider the transformation
\begin{equation}
\begin{array}{ll}
\psi: (0,T)\times {\mathbb R}^ n_+\rightarrow (0,T)\times[1,\infty)\times \left(0,1 \right)^ {n-1},\\
\\
\psi(t,S_1,\cdots ,S_n)= \left( t ,\frac{\sum_{i=1}^n S_i}{F},\frac{S_2}{\sum_{i=1}^n S_i},\cdots ,\frac{S_n}{\sum_{i=1}^n S_i}\right).
\end{array}
\end{equation}
Note that the spatial part of $\psi\left( (0,T)\times {\mathbb R}^ n_+\right) $ is homeomorph to the half space $H_{\geq 1}=\left\lbrace x\in {\mathbb R}^ n|x_1\geq 1 \right\rbrace $. In the following the domain $D$ is the interior of the image of $\psi$, i.e. $D:=(0,T)\times (1,\infty)\times \left(0,1 \right)^ {n-1}$. We have
\begin{equation}
S_1=x_1F\left(1-\sum_{j\geq 2}x_j\right),~S_j=x_jx_1F.
\end{equation}
We get
\begin{equation}\label{frontbd}
\left\lbrace \begin{array}{ll}
u_{t}=\frac{F_t}{F}x_1\frac{\partial u}{\partial x_1}+\frac{1}{2}\sum_{ij}a^F_{ij}\frac{\partial^ 2 u}{\partial x_i\partial x_j}+\sum_j b^F_j\frac{\partial u}{\partial x_j}+r\left( x_1\frac{\partial u}{\partial x_1}-u\right) ,\\
\\
(BC1)~~u(0,\infty,x_2,\cdots ,x_n)=0 \mbox{ on $x_1=\infty$}\\
\\
(BC2)~~u_{x_1}(t,1,x_2,\cdots ,x_n)-u(t,1,x_2,\cdots ,x_n)=-K \\
\hspace{6cm}\mbox{ on $x_1=1$}\\
\\
(BC3)~~F(t,x_2,\cdots ,x_n)=K-u(t,1,x_2,\cdots,x_n)\\
\\
(IC)~~u(0,x)=\max \{K-x_1, 0\}
\end{array}\right.
\end{equation}
\begin{rem}
We include (BC1) as an implicit boundary condition in order to indicate that \eqref{frontbd} is equivalent to an initial-boundary value problem of the second type on a finite domain (just by suitable additional transformation with respect to the variable $x_1$).
\end{rem}
The mixed condition $(BC2)$ follows from the smooth fit condition together with $(BC3)$ .
\begin{rem}
Note that in the context of market models based on Levy processes or, more generally, Feller processes the smooth fit condition does not hold in general and one has to be careful concerning generalization at this point. 
\end{rem}
In order to determine the coefficients $a^F_{ij}$ and $b^F_i$ we compute first
\begin{equation}
\begin{array}{ll}
F\frac{\partial x_j}{\partial S_i}=\frac{\delta_{ij}-x_j}{x_1},~
F\frac{\partial x_j}{\partial S_1}=1-\sum_{j\geq 2}(\delta_{ij}-x_j)\frac{F_j}{F},~
\frac{\partial}{\partial S_i}=\sum_j \frac{\partial x_j}{\partial S_i}\frac{\partial}{\partial x_j}.
\end{array}
\end{equation}
We observe that
\begin{equation}
\sum_i S_i\frac{\partial x_j}{\partial S_i}=\sum_i S_i\frac{\delta_{ij}-x_j}{x_1F}=0.
\end{equation}
It follows that
\begin{equation}
\begin{array}{ll}
\sum_i S_i\frac{\partial }{\partial S_i}&=\sum_{ij}\frac{\partial x_j}{\partial S_i}\frac{\partial }{\partial S_i}¬=\sum_{i}S_i\frac{\partial x_1}{\partial S_i}\frac{\partial }{\partial x_1}    +\sum_{j\geq 2}\left(\sum_i S_i\frac{\partial x_1}{\partial S_i} \right) \frac{\partial }{\partial x_j}     \\
\\
&=\sum_i S_i\frac{\partial x_1}{\partial S_i}\frac{\partial}{\partial x_1}=\sum_i S_i\left(\frac{1}{F}-\sum_{j\geq 2}\left(\delta_{ij}-x_j\right)\frac{F_j}{F^ 2} \right) \frac{\partial}{\partial x_1}\\
\\
&=x_1\frac{\partial }{\partial x_1}-\left(\sum_{i}S_i\right) \left(\sum_{j\geq 2}\left(\frac{\partial x_j}{\partial S_i}\right)\frac{F_j}{F}x_1 \right)\frac{\partial }{\partial x_1}=x_1\frac{\partial }{\partial x_1}.
\end{array}
\end{equation}
Hence, we have
\begin{equation}
r\left(\sum_i S_i \frac{\partial}{\partial S_i}\right)=rx_1\frac{\partial}{\partial x_1}.
\end{equation}
It is clear that
\begin{equation}
a^F_{ij}=\sum_{kl}v_{kl}S_kS_l\frac{\partial x_i}{\partial S_l}\frac{\partial x_l}{\partial S_k},~~
b^F_{j}=\sum_{kl}v_{kl}S_kS_l\frac{\partial^2 x_j}{\partial S_k\partial S_l}.
\end{equation}
In order to determine the latter coefficient functions we compute
\begin{equation}
\frac{\partial x_j}{\partial S_i}=\frac{1}{F}\frac{\delta_{ij}-x_j}{x_1}, j\geq 2,~~
\frac{\partial x_1}{\partial S_i}=\frac{1}{F}\left( 1-\sum_{j\geq 2}(\delta_{ij}-x_j)\frac{F_j}{F}\right).
\end{equation}
Next, for $j\geq 2$ we have
\begin{equation}
\frac{\partial^ 2 x_j}{\partial S_i\partial S_k}=\sum_l\frac{\partial \left(\frac{\delta_{ij}-x_j}{Fx_1} \right) }{\partial x_l}, \mbox{ and }
\end{equation}
\begin{equation}
\frac{\partial \left(\frac{\delta_{ij}-x_j}{Fx_1} \right)}{\partial x_l}
=-\frac{\delta_{jl}}{x_1F}+\frac{(x_j-\delta_{ij})(\delta_{1l}F+x_1F_l(1-\delta_{1l}))}{(x_1F)^ 2}.
\end{equation}
Finally,
\begin{equation}
\frac{\partial^ 2 x_1}{\partial S_i\partial S_k}=\sum_l \frac{\partial}{\partial x_l}\left(\frac{1}{F}-\sum_{j\geq 2}(\delta_{ij}-x_j)\frac{F_j}{F^ 2}\right) \frac{\partial x_l}{\partial S_k}, \mbox{ where}
\end{equation}
\begin{equation}
\begin{array}{ll}
\frac{\partial}{\partial x_l}\left(\frac{1}{F}-\sum_{j\geq 2}(\delta_{ij}-x_j)\frac{F_j}{F^ 2}\right)\\
\\
=-\frac{F_l}{F^2}(1-\delta_{1l})-\sum_{j\geq 2}\frac{-\delta_{jl}F_j+(\delta_{ij}-x_j)F_{jl}(1-\delta_{1l})-2F_jF_l(1-\delta_{1l})(\delta_{ij}-x_j)}{F^ 3}.
\end{array}
\end{equation}
Here $\delta_{ij}$ is always the Kronecker Delta, $F_j$ is short for $\frac{\partial F}{\partial x_j}$, $F_{jl}$ is short for $\frac{\partial^ 2 F}{\partial x_j\partial x_l}$. Now we have determined the explicit form of \eqref{frontbd}. The next step is to construct a representation of the solution of \eqref{frontbd} in terms of convolutions with the transition density, i.e the fundamental solution related to \eqref{frontbd}.

 Note that only the inner regularity of the free boundary function can be proved. 
Starting with some $u^0$ (solution of (\ref{frontbd}) for $F\equiv1$ for example) for numerical reasons we may consider an iteration scheme $v^n=tu^n$ with $v^0=tu^0$, and consider for $n\geq 1$ the following iteration for  \ref{frontbd}. Let
\begin{equation}\label{redpara*}
\begin{array}{ll}
v^n_{t}=\frac{F^{n-1}_t}{F^{n-1}}x_1 \frac{\partial v^n}{\partial x_1}+\frac{1}{2}\sum_{ij}a^{F^{n-1}}_{ij}\frac{\partial^ 2 v^n}{\partial x_i\partial x_j}+\sum_j b^{F^{n-1}}_j\frac{\partial v^n}{\partial x_j}\\
\\
+r\left( x_1\frac{\partial v^n}{\partial x_1}-v^n\right)+u^{n-1},
\end{array}
\end{equation}
where $v^n$ has zero initial condition. On the hyperplane $\left\lbrace x_1=1\right\rbrace $ we require
\begin{equation}
v^n_{x_1}(t,1,x_2,\cdots ,x_n)-v^n(t,1,x_2,\cdots ,x_n)=-t K.
\end{equation}
Given $u^{n-1}$ and $F^{n-1}$ for each $n$ we look first at the solution for $v^n$ in the form
(note that $\int_{D_b}$ below contains an integral $\int_0^t$)
\begin{equation}\label{itequation}
\begin{array}{ll}
v^n(t,x)=&\int_0^ t \int_H p_{v^n}(t,1, x;\tau,1, \hat{y}_1)\phi_{v^n} (\tau,1,\hat{y}_1)dH_yd\tau\\
\\
&+\int_{D_b} u^{n-1}(s,y)p_{v^n}(t,x;s,y)dyds,
\end{array}
\end{equation}
where $p_{v^n}$ is the fundamental solution of
\begin{equation}
v^n_{t}=\frac{1}{2}\sum_{ij}a^{F^{n-1}}_{ij}\frac{\partial^ 2 v^n}{\partial x_i\partial x_j}+\sum_j b^{F^{n-1}}_j\frac{\partial v^n}{\partial x_j}+\frac{F^{n-1}_t}{F^{n-1}}x_1 \frac{\partial v^n}{\partial x_1},
\end{equation}
and where $\phi_{v^n}$ solves an integral equation 
\begin{equation}\label{vol2}
\begin{array}{ll}
\frac{1}{2}\phi_{v_n}(t,1,\hat{x}_1)=\Gamma_{v_n}(t,1,\hat{x}_1)+\\
\\ \int_{t_0}^ t \int_{H_0} (\frac{\partial}{\partial x_1}p_{v_n}(t,1,\hat{x}_1,\tau,\hat{y}_1)-p_{v_n}(t,1,\hat{x}_1;\tau,1,\hat{y}_1))\phi_{v_n} (\tau,1,\hat{y}_1)dH_yd\tau,
\end{array}
\end{equation}
(defining $\phi_{v_n}$) with 
\begin{equation}
\begin{array}{rrr}
\Gamma_{v_n} (t,x)=&\int_{O} \left( \frac{\partial}{\partial x_1}p_{v_n}(t,x;t_0,y)- p_{v_n}(t,x;t_0,y)\right) t\psi_0(y)dy+t K.
\end{array}
\end{equation}
\begin{rem}
Asymptotic analysis shows that even $\sqrt{t}F_t$ is bounded as $t\downarrow 0$.
\end{rem}
For the next step we get $F^n$ and $u^n$ via $tF_n=K-v^n(1,\hat{x}_1)$ and $tu^n=v^n$. In the case of strictly elliptic operators $L$  we get convergence in adapted Banach spaces for $v^n$ (cf. \cite{K}). Regularity resuls and convergence of the scheme can be generalised to the case of micro-hypoelliptic operators $L$, i.e. if $L$ has the
representation
\begin{equation}
	\label{Lhypo}
	\begin{array}{ll}
		L\equiv\frac{1}{2}\sum_{i=1}^mA_i^2u+A_0u
	\end{array}
\end{equation}
with a matrix-valued function $x\rightarrow (a_{ji})^{d,m}(x),~0\leq i,j\leq n$ on ${\mathbb R}^n$, and $m+1$ smooth vector fields of dimension $d$ 
\begin{equation}
A_i=\sum_{j=1}^d a_{ji}\frac{\partial}{\partial x_j},~1\leq j\leq m,
\end{equation}
where $0\leq i\leq m$ such that the H\"{o}rmander condition with respect to the subspace ${\mathbb R}^d$, i.e. assume that
\begin{equation}\label{Hoer}
\left\lbrace A_i, \left[A_j,A_k \right], \left[ \left[A_j,A_k \right], A_l\right],\cdots |1\leq i\leq m,~0\leq j,k,l\cdots \leq m \right\rbrace 
\end{equation}
spans ${\mathbb R}^d$ at each point $x\in \left\lbrace  x|x\in {\mathbb R}^n \mbox{ and } x_1\geq 1\right\rbrace $. The proof uses the estimate in \cite{KS} cited above. Details will be given in \cite{Kb}.

\end{document}